\documentclass[12pt, reqno, twoside, letterpaper]{amsart}

\usepackage{z-Paperstyle}
\def\fq{{\mathfrak q}}
\def\fX{{\mathfrak X}}

\def\EOS{\mathsf{EOS}}
\def\QRH{\mathsf{QRH}}

\title[The EOS conjecture for prime-power moduli]{On the
Erd\H{o}s--Odlyzko--S\'ark\"ozy conjecture\\ for prime-power moduli}

\author{William Banks}
\address{Department of Mathematics\\
University of Missouri\\
Columbia, MO 65211\\
USA}
\email{bankswd@missouri.edu}

\author{Igor E. Shparlinski}
\address{School of Mathematics and Statistics\\
University of New South Wales\\
Sydney, NSW 2052\\
Australia}
\email{igor.shparlinski@unsw.edu.au}

\keywords{Erd\H{o}s--Odlyzko--S\'ark\"ozy Conjecture, prime-power moduli, character sums with primes}
\subjclass[2020]{11N05, 11L05}

\date{\today}

\begin{document}

\dedicatory{Dedicated to the memory of Nikolai Mikhailovich Korobov} 

\begin{abstract}
Erd\H{o}s, Odlyzko and S\'ark\"ozy conjectured that every reduced residue class modulo $q$ contains a product of two primes not exceeding $q$. O. Klurman, A. P. Mangerel and J. Ter\"av\"ainen (2023) have shown that three primes suffice when $q$ is sufficiently smooth. We study the moduli $q=p^\gamma$ that are powers of a fixed prime $p$ and we show that the third prime can be taken subpolynomially small. For any  $q_\bullet\le q$ for which $q_\bullet\ge\exp\bigl(C(\log q)^{2/3}(\log\log q)^{4/3}\bigr)$, we give an asymptotic formula for the weighted number of representations of each reduced class as a product of two primes not exceeding $q$ and a third not exceeding~$q_\bullet$. We also show that the conjecture holds for all but a proportion $\exp\bigl(-c(\log q)^{1/3}(\log\log q)^{-1/3}\bigr)$ of the reduced classes. Here $C$ and $c$ are positive constants that depend only on $p$. Both results are unconditional and they rely on a wide zero-free region with no exceptional zeros for the Dirichlet $L$-functions attached to characters modulo $p^\gamma$.We also present conditional versions of these results for arbitrary $q$ under a quasi-Riemann hypothesis.
\end{abstract}

\maketitle

\tableofcontents


\section{Introduction and statement of results}
\label{sec:intro}

For integers $k,q\ge 1$ and a real number $x\ge 1$, we consider the
set of reduced residue classes in $(\Z/q\Z)^\times$ that contain a product of
$k$ primes not exceeding $x$; that is, we write
\[
\EOS_k(q;x)\defeq\bigl\{a\in(\Z/q\Z)^\times:
\ell_1\cdots\ell_k\equiv a\bmod q
\text{~for some primes~}\ell_1,\ldots,\ell_k\le x\bigr\},
\]
and we set
\[
\EOS_k(q) = \EOS_k(q;q).
\]

For the binary case, Erd\H{o}s, Odlyzko and S\'ark\"ozy~\cite{EOS}
have conjectured that $\EOS_2(q)=(\Z/q\Z)^\times$ for all
large $q$, that is, every class $a\in(\Z/q\Z)^\times$
contains a product $\ell_1\ell_2$ of two primes $\ell_1,\ell_2\le q$.
For general moduli $q$, the strongest unconditional results are due
to Matom\"aki and Ter\"av\"ainen~\cite{MT}, who showed that
$|\EOS_2(q)|\ge(\tfrac23-\eps)\varphi(q)$ for every $\eps>0$ and all
large~$q$, and that $\EOS_3(q)=(\Z/q\Z)^\times$ for all large
cube-free $q$. The barrier they have identified is a possible real
character $\psi\bmod q$ with $\psi(\ell)=-1$ for almost all primes
$\ell\le q$; this is precisely the behavior that a Siegel zero would
produce.

In this paper we study the EOS conjecture for
prime-power moduli $q=p^\gamma$, where $p$ is a fixed prime
and $\gamma$ is large. It has been known since the work
of Postnikov~\cite{Po55,Po56} that character sums for
such moduli satisfy bounds that are significantly stronger than
what is available in general. Combining Postnikov's idea with the
method of Korobov~\cite{Ko69,Ko74} for double Weyl sums, the
authors~\cite{BS19,BS20} have shown that the Dirichlet
\mbox{$L$-functions} attached to characters modulo $q=p^\gamma$ have
no Siegel zeros and that their low-lying zeros are excluded from a
region that is far larger than the classical one.
For such moduli the ternary problem is already settled. Klurman,
Mangerel and Ter\"av\"ainen~\cite{KMT} showed that
$\EOS_3(q)=(\Z/q\Z)^\times$ for all large $q$ that are
$q^{\eps}$-smooth, once $\eps>0$ is small enough.
Our first result sharpens this for $q=p^\gamma$ by allowing the
third prime to be much smaller than $q$ even on the logarithmic scale.

\begin{theorem}[Ternary case, prime-power moduli]\label{thm:ternary}
For any fixed prime $p$, there are constants $\gamma_1$, $C_1>0$ and $c_1>0$ 
such that the following holds. Let $q=p^\gamma$ with
$\gamma\ge\gamma_1$ and suppose that a real $q_\bullet\le q$ satisfies
\be\label{eq:deranged}
\log q_\bullet\ge C_1(\log q)^{2/3}(\log\log q)^{4/3}.
\ee  
Then for every $a\in(\Z/q\Z)^\times$ we have
\[
\ssum{\ell_1,\ell_2\le q,\,\ell_3\le q_\bullet\\
\ell_1\ell_2\ell_3\equiv a\bmod q}
\log\ell_1\cdot\log\ell_2\cdot\log\ell_3
=\frac{q^2q_\bullet}{\varphi(q)}\biggl\{1+O\biggl(\exp\Bigl(
-\frac{c_1\log q_\bullet}{(\log q)^{2/3}(\log\log q)^{1/3}}
\Bigr)\biggr)\biggr\},
\]
the sum being over primes and the implied constant depending only
on $p$. 
\end{theorem}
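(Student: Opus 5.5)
We expand the congruence condition by orthogonality of Dirichlet characters modulo $q=p^\gamma$. Writing $\psi(x,\chi)=\sum_{\ell\le x}\chi(\ell)\log\ell$ (primes $\ell$), the sum in Theorem~\ref{thm:ternary} equals
\[
\frac{1}{\varphi(q)}\sum_{\chi\bmod q}\bar\chi(a)\,\psi(q,\chi)^2\,\psi(q_\bullet,\chi).
\]
The principal character gives $\varphi(q)^{-1}\theta(q)^2\theta(q_\bullet)$. Since every prime $\ell\ne p$ is coprime to $q$, and the prime number theorem with the classical error term applies for $x\ge q_\bullet$, this is $q^2q_\bullet/\varphi(q)$ times a factor $1+O(\exp(-c\sqrt{\log q_\bullet}))$. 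That error is smaller than the claimed one, because \eqref{eq:deranged} gives $\sqrt{\log q_\bullet}\gg \log q_\bullet/(\log q)^{2/3}$ whenever $\log q_\bullet\le\log q$. It remains to bound the contribution of the nonprincipal characters by the same relative error.

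For the nonprincipal characters we separate the two kinds of factors. For the two long sums we use Cauchy--Schwarz together with the large sieve, or simply Parseval over the full group of characters:
\[
\sum_{\chi}|\psi(q,\chi)|^2=\varphi(q)\ssum{\ell,\ell'\le q\\ \ell\equiv\ell'\bmod q}\log\ell\log\ell'\ll q\log q\cdot\varphi(q).
\]
So the nonprincipal contribution is at most
\[
\max_{\chi\ne\chi_0}|\psi(q_\bullet,\chi)|\cdot q\log q .
\]
Compared with the main term $q^2q_\bullet/\varphi(q)\asymp q\,q_\bullet$, this means it suffices to show, uniformly for $\chi\ne\chi_0$ modulo $p^\gamma$,
\[
|\psi(q_\bullet,\chi)|\ll q_\bullet\exp\Bigl(-\frac{2c_1\log q_\bullet}{(\log q)^{2/3}(\log\log q)^{1/3}}\Bigr).
\]
The spare factor absorbs the $\log q$. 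This absorption is legitimate because \eqref{eq:deranged} makes the exponent at least $2c_1C_1(\log\log q)$, and $C_1$ can be taken large.

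This uniform bound for primes in progressions to a short-ish height $q_\bullet$ is the main step, and it is where the Postnikov--Korobov input enters. From \cite{BS19,BS20} we take the zero-free region for $L(s,\chi)$ with $\chi$ modulo $p^\gamma$: no zeros with
\[
\sigma\ge 1-\frac{c}{(\log q)^{2/3}(\log\log q)^{1/3}}
\]
for $|t|\le T$, with $T$ at least a power of $\log q$ or of $q_\bullet$, and with no exceptional real zero. We also need a bound of the shape $\log|L(s,\chi)|\ll\log q$ near the line $\sigma=1$. The explicit formula truncated at height $T$ then gives
\[
\psi(x,\chi)\ll \sum_{|\gamma_\rho|\le T}x^{\beta_\rho}+\frac{x(\log qx)^2}{T},
\]
where the zero count is $N(T,\chi)\ll T\log(qT)$. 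Inserting the zero-free region gives the bound
\[
x\,T\log(qT)\exp\Bigl(-\frac{c\log x}{(\log q)^{2/3}(\log\log q)^{1/3}}\Bigr)+\frac{x\log^2(qx)}{T}.
\]
We then choose $T=\exp\bigl(\tfrac{c}{2}\log x/((\log q)^{2/3}(\log\log q)^{1/3})\bigr)$. Hypothesis \eqref{eq:deranged} guarantees that $\log T\gg(\log q)^{0}(\log\log q)\cdot C_1$, which is large enough to beat the factors $\log q$. The exponent $4/3$ in \eqref{eq:deranged} is exactly what is needed so that $\log T/\log\log q$ is large.

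The delicate point is making sure the zero-free region of \cite{BS20} holds up to height $T$ as chosen, which can be as large as $q_\bullet^{\,o(1)}$, rather than only for $|t|\le$ a fixed power of $\log q$. If the region is available only in a range $|t|\le \exp((\log q)^{\delta})$, we would first try a smoothed explicit formula so that a smaller $T$ suffices, accepting losses absorbed by the constant $C_1$. For the uniformity in $\chi$ with conductor possibly smaller than $q$, we use that the conductor is again a power of $p$ and the region only improves.
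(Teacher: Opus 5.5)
Your architecture is the paper's: orthogonality, the prime number theorem for $\chi_0$, Parseval for the two sums up to $q$, and a uniform pointwise bound for $\vartheta(q_\bullet,\chi)$ from the explicit formula plus the Banks--Shparlinski zero-free region. Your explicit-formula step differs in ways that do no harm.
\begin{itemize}
\item Counting $N(T,\chi)\ll T\log qT$ zeros and taking $T=x^{\eta/2}$ only halves the saving, which $c_1$ and $C_1$ absorb. The paper instead keeps the weight $1/|\rho|$ and gets $x^{1-\eta_q}(\log qx)^2$ with $T=\min\{x,T_q\}$.
\item You drop the factor $1/|\rho|$ for zeros with $|\gamma|\le 1$ and omit the constant $L'/L(1,\chi)$. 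Both are $O(\eta^{-1}\log q)$, because $\beta\ge\eta$ by the functional equation, so this costs only a polylogarithmic factor.
\item Your worry about the height is unnecessary for large conductors. \cite[Theorem~12]{BS20} reaches $|t|\le\exp\{B(\log\fq)^{5/3}(\log\log\fq)^{1/3}\}$, far above your $T\le\exp(O((\log q)^{1/3}))$, so no smoothing is needed.
\end{itemize}

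The real gap is your last sentence, on uniformity in the conductor. It is not true that ``the region only improves'' for smaller conductors. \cite[Theorem~12]{BS20} is proved only for primitive characters modulo $p^{\gamma_\star}$ with $\gamma_\star$ exceeding a fixed $\gamma_\sharp$, and its height shrinks with $\fq$. So for the nonprincipal characters modulo $p^\gamma$ induced from small conductors (for example conductor $p$), it gives no region up to your height $T$. It also does not exclude an exceptional real zero for them. Without this, $\max_{\chi\ne\chi_0}|\vartheta(q_\bullet,\chi)|$ is not controlled, and the theorem does not follow.

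The paper devotes Lemma~\ref{lem:zerofree} to exactly this point. It uses \cite{BS20} when $\log\fq>(\log q)^{7/8}$ and Iwaniec's region \cite[Theorem~2]{Iw74} otherwise. It removes exceptional zeros by observing that a real primitive character of $p$-power conductor has conductor $p$, $4$ or $8$. Hence at most three fixed $L$-functions occur, and their real zeros lie below some $\beta_p<1$ depending only on $p$.

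In your set-up only $\log T\ll(\log q)^{1/3}$ is needed. So below a threshold such as $\log\fq=(\log q)^{1/2}$, the classical region $\sigma>1-c/\log(\fq\tau)$ would already give width $\gg(\log q)^{-1/2}\ge\eta_q$. The finiteness argument for the real characters is still required.
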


Applying Theorem~\ref{thm:ternary} with equality
in~\eqref{eq:deranged}, we see that if $\gamma$ is large enough
then every $a\in(\Z/q\Z)^\times$ is the product
$\ell_1\ell_2\ell_3$ of three primes with $\ell_1,\ell_2\le q$ and
\[
\ell_3\le\exp\bigl(C_1(\log q)^{2/3}(\log\log q)^{4/3}\bigr)
=q^{o(1)}\qquad(\gamma\to\infty).
\]

\begin{theorem}[Binary case, prime-power moduli]\label{thm:binary}
For any fixed prime $p$, there are constants $\gamma_2$ and $C_2>0$
such that for every modulus $q=p^\gamma$ with $\gamma\ge\gamma_2$
we have
\[
\bigl|(\Z/q\Z)^\times\setminus \EOS_2(q)\bigr|
\le\varphi(q)\exp(-C_2(\log q)^{1/3}(\log\log q)^{-1/3}).
\]
\end{theorem}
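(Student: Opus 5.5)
We plan to prove Theorem~\ref{thm:binary} by a second-moment (variance) argument over residue classes, in which only characters whose prime sums are not small survive, and then to bound how many such characters there can be.

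\textbf{Setup.} For a character $\chi\bmod q$ put
\[
S(\chi)=\sum_{\ell\le q}\chi(\ell)\log\ell ,
\qquad
R(a)=\ssum{\ell_1,\ell_2\le q\\ \ell_1\ell_2\equiv a\bmod q}\log\ell_1\log\ell_2 .
\]
Orthogonality gives
\[
R(a)=\frac{1}{\varphi(q)}\sum_{\chi}\bar\chi(a)S(\chi)^2 .
\]
The principal character contributes $(1+o(1))q^2/\varphi(q)$.

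\textbf{Choosing the small characters.} Fix a threshold $\delta=\exp(-c(\log q)^{1/3}(\log\log q)^{-1/3})$. Let $\mathcal B$ be the set of nonprincipal $\chi$ with $|S(\chi)|>\delta q$. For each class $a$, split the sum over $\chi\ne\chi_0$ into $\chi\notin\mathcal B$ and $\chi\in\mathcal B$.

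For $\chi\notin\mathcal B$ we use Parseval over $a$:
\[
\sum_a\Bigl|\frac1{\varphi(q)}\sum_{\chi\notin\mathcal B}\bar\chi(a)S(\chi)^2\Bigr|^2
=\frac1{\varphi(q)}\sum_{\chi\notin\mathcal B}|S(\chi)|^4 .
\]
We bound $|S(\chi)|^4\le\delta^2q^2|S(\chi)|^2$. The large sieve (or simply Parseval for primes up to $q$) gives
\[
\sum_\chi|S(\chi)|^2\le\varphi(q)\sum_{\ell\le q}\log^2\ell\ll q^2\log q .
\]
So this part has mean square $\ll\delta^2q^4\log q$, whereas the main term squared is about $q^2\cdot(q^2/\varphi(q))^2\cdot\ldots$ per class. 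By Chebyshev's inequality, the number of $a$ for which this part exceeds half the main term is $\ll\varphi(q)\,\delta^2\log q$.

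\textbf{Bounding the large characters.} The key step is to show that $\mathcal B$ is tiny, with $|\mathcal B|\le\exp(c'(\log q)^{1/3}\ldots)$, or even empty apart from characters of small conductor.

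For characters modulo $p^\gamma$, the explicit formula combined with the wide zero-free region with no exceptional zeros from \cite{BS19,BS20} shows the following. If $\chi$ has conductor $p^{\gamma'}$ with $\gamma'$ large, then $S(\chi)\ll q\exp(-c(\log q)^{1/3}(\log\log q)^{-1/3})$. This is the Korobov-type bound with $\sigma\ge1-c/((\log q)^{2/3}(\log\log q)^{1/3})$, applied at height $T\approx q$ and $x=q$. Here we use the zero-free region of shape $1-c/\max\{(\log q)^{2/3}(\log\log q)^{1/3},\ldots\}$ together with a log-free zero density estimate to handle zeros near the boundary.

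We then choose $\delta$ slightly above this bound, which makes $\mathcal B$ empty for all characters of large conductor. Characters of small conductor ($p^{\gamma'}$ with $\gamma'$ bounded by a power of $\log q$) are handled by the classical Siegel--Walfisz-type estimate and their zero-free region. This also yields the saving $\delta$.

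If instead some characters remain in $\mathcal B$, we bound their combined contribution by $|\mathcal B|\max|S|^2/\varphi(q)$. We then use a further Chebyshev step with fourth moments restricted to $\mathcal B$.

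\textbf{Conclusion.} In the end, the classes $a$ with $R(a)=0$ number at most $\varphi(q)\delta^2\log q$, which gives the stated bound after adjusting $C_2$.

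\textbf{Main obstacle.} The delicate point is establishing the uniform pointwise bound for $S(\chi)$ over all nonprincipal $\chi\bmod p^\gamma$ with saving $\exp(-c(\log q)^{1/3}(\log\log q)^{-1/3})$ when the length is only $x=q$. The zero-free region width $(\log q)^{-2/3}$ gives saving $\exp(-c\log x/(\log q)^{2/3})=\exp(-c(\log q)^{1/3})$ up to logarithmic factors. However, controlling zeros up to height $q$ requires the region to hold uniformly in $t$ (with $\log(q|t|)$), plus a density estimate. I would try to invoke the $L$-function results of \cite{BS20} directly in that uniform form.
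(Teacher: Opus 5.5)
Your reduction is correct and is, up to duality, the paper's own argument. Writing $\vartheta(q,\chi)$ for your $S(\chi)$, the Parseval--Chebyshev step with $\mathcal B=\emptyset$ amounts to
\[
|\cE|\,\vartheta(q,\chi_0)^4\le\varphi(q)\max_{\chi\ne\chi_0}\bigl|\vartheta(q,\chi)\bigr|^2\sum_{\chi\bmod q}\bigl|\vartheta(q,\chi)\bigr|^2 .
\]
This is exactly what the paper gets by summing $f(a)$ over $a\in\cE$, isolating $\chi_0$ and applying Cauchy--Schwarz. With $\max_{\chi\ne\chi_0}|\vartheta(q,\chi)|\ll q^{1-\eta_q}(\log q)^2$, both routes give $|\cE|\ll\varphi(q)q^{-2\eta_q}(\log q)^5$. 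Some of your intermediate bookkeeping is off by powers of $q$ (e.g.\ ``mean square $\ll\delta^2q^4\log q$''), but the conclusion $|\cE|\ll\varphi(q)\delta^2\log q$ is right. The set $\mathcal B$ is superfluous. The fallback for $\mathcal B\ne\emptyset$ is not an argument, because without a pointwise bound a single $\chi\in\mathcal B$ can contribute as much as the main term.

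The genuine gap is the step you flag yourself: the bound $|\vartheta(q,\chi)|\ll q^{1-\eta_q}(\log q)^2$, with $\eta_q\asymp(\log q)^{-2/3}(\log\log q)^{-1/3}$, for \emph{every} nonprincipal $\chi\bmod p^\gamma$. You truncate the explicit formula at $T\approx q$. Then characters of intermediate conductor $\fq=p^{\gamma'}$ are covered by nothing you cite:
\begin{itemize}
\item The region of \cite[Theorem~12]{BS20} has the required width only for $|t|\le\exp\{B(\log\fq)^{5/3}(\log\log\fq)^{1/3}\}$. This height is below $q$ once $\log\fq$ is smaller than roughly $(\log q)^{3/5}$.
\item Siegel--Walfisz needs $\fq\le(\log q)^A$, i.e.\ $\gamma'\ll\log\log q$, not $\gamma'$ up to a power of $\log q$.
\item At height $q$ the classical region has width $\asymp1/\log q$, which gives no saving at all when $x=q$.
\item At height $q$, Iwaniec's region has width $\asymp(\log q\,\log\log q)^{-3/4}$, which is smaller than $\eta_q$.
\end{itemize}
So for $\log\log q\ll\log\fq\ll(\log q)^{3/5}$ the pointwise bound is not established. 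A log-free density estimate cannot repair this. For a single $L$-function at $x=q$ you need the \emph{absence} of zeros with $\beta>1-\eta_q$ up to the truncation height, and once that is known the sum over zeros is bounded trivially.

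The missing idea is that the truncation height can be taken far below $q$: the truncation error $x(\log qx)^2/T$ is harmless as soon as $T\ge q^{\eta_q}$. The paper (Lemma~\ref{lem:zerofree} and Proposition~\ref{prop:pointwise}) takes $T_q=\exp((\log q)^{7/8})$ and argues as follows.
\begin{itemize}
\item For $\log\fq>(\log q)^{7/8}$, the region of \cite{BS20} covers $|t|\le T_q$.
\item For $\log\fq\le(\log q)^{7/8}$, Iwaniec's region \cite[Theorem~2]{Iw74} has width $\gg(l\log 2l)^{-3/4}$ with $l\ll(\log q)^{7/8}$. This exceeds $\eta_q$ because $\tfrac78\cdot\tfrac34<\tfrac23$.
\item The only possible exceptional zeros belong to real primitive characters. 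For $p$-power conductor these have conductor $p$, $4$ or $8$, so their real zeros lie below a constant $\beta_p<1$ depending only on $p$.
\end{itemize}
The explicit formula then needs only routine inputs: $\beta\ge\eta_q$ (by the functional equation) for zeros with $|\gamma|\le1$, and $\frac{L'}{L}(1,\fX)\ll\eta_q^{-1}\log q$. Truncating at $\log T=(\log q)^{2/3}$ would even let the classical region handle all $\fq$ with $\log\fq\le(\log q)^{2/3}$, with \cite{BS20} covering the rest.
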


We now turn to an \emph{arbitrary} modulus $q\ge 3$. Our remaining
results are conditional on the following quasi-Riemann hypothesis
for the $L$-functions attached to characters modulo $q$.

\medskip\noindent\textsc{Hypothesis $\QRH[q,\sigma_0]$.}
{\itshape For every nonprincipal character $\chi$ modulo $q$, the
function $L(s,\chi)$ does not vanish in the half-plane
$\sigma>\sigma_0$.}
\smallskip

\begin{theorem}[Ternary case, general moduli]
\label{thm:conditional-3} 
There is a constant $C>0$, depending only on $\sigma_0$, such that
under $\QRH[q,\sigma_0]$ every $a\in(\Z/q\Z)^\times$ is the product
$\ell_1\ell_2\ell_3$ of three primes with $\ell_1,\ell_2\le q$ and
$\ell_3\le C(\log q)^{2/(1-\sigma_0)}$.
\end{theorem}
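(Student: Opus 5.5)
We prove Theorem~\ref{thm:conditional-3} by the usual orthogonality argument: expand the weighted count in Dirichlet characters, let the principal character supply the main term, and use $\QRH[q,\sigma_0]$ to bound the contribution of all other characters. For $a\in(\Z/q\Z)^\times$, set $x=q$, let $y\le q$ be a parameter, and consider
\[
S(a)\defeq\ssum{\ell_1,\ell_2\le x,\,\ell_3\le y\\ \ell_1\ell_2\ell_3\equiv a\bmod q}\log\ell_1\cdot\log\ell_2\cdot\log\ell_3
=\frac{1}{\varphi(q)}\sum_{\chi\bmod q}\bar\chi(a)\,\vartheta(x,\chi)^2\,\vartheta(y,\chi),
\]
where $\vartheta(z,\chi)=\sum_{\ell\le z}\chi(\ell)\log\ell$. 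The principal character contributes $\vartheta(x,\chi_0)^2\vartheta(y,\chi_0)/\varphi(q)$. Primes dividing $q$ cost only $O(\log q)$ in each $\vartheta$, so by the prime number theorem this term is $\gg x^2y/\varphi(q)$ once $y\to\infty$. It therefore suffices to show that
\[
\sum_{\chi\ne\chi_0}\bigl|\vartheta(x,\chi)\bigr|^2\,\bigl|\vartheta(y,\chi)\bigr|=o(x^2y).
\]

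The key input is a pointwise bound for $\vartheta(y,\chi)$ at the small height $y$. Under $\QRH[q,\sigma_0]$, the truncated explicit formula, applied with $T$ a power of $q$ and using $N(T,\chi)\ll T\log(qT)$, gives
\[
\psi(y,\chi)\ll y^{\sigma_0}(\log qy)^2+\frac{y}{T}(\log qy)^2 .
\]
Passing from $\psi$ to $\vartheta$ costs $O(y^{1/2})$, and we may take $\sigma_0\ge 1/2$. Hence, uniformly for $\chi\ne\chi_0$,
\[
|\vartheta(y,\chi)|\le A\,y^{\sigma_0}(\log q)^2 ,
\]
with $A$ absolute; the point is that this holds for \emph{every} nonprincipal character, with no exceptional zero. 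For the long sums we do not use $\QRH$ at all. Orthogonality gives the large-sieve-type bound
\[
\sum_{\chi}\bigl|\vartheta(x,\chi)\bigr|^2=\varphi(q)\sum_{b\in(\Z/q\Z)^\times}\Bigl(\sum_{\ell\le x,\ \ell\equiv b}\log\ell\Bigr)^2 .
\]
Since $x=q$, each class $b$ contains at most one prime $\ell\le q$ (namely $\ell=b$ as an integer), so the right-hand side is $\varphi(q)\sum_{\ell\le q}\log^2\ell\ll \varphi(q)\,q\log q$.

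Combining the two bounds, the nonprincipal part of $S(a)$ is at most
\[
\frac{1}{\varphi(q)}\cdot A\,y^{\sigma_0}(\log q)^2\cdot O\bigl(\varphi(q)\,q\log q\bigr)\ll q\,y^{\sigma_0}(\log q)^3 .
\]
The main term is $\gg q^2y/\varphi(q)\ge qy$. So the main term dominates as soon as
\[
y^{1-\sigma_0}\ge C'(\log q)^3 .
\]
This forces $y\approx(\log q)^{3/(1-\sigma_0)}$, which is worse than the claimed exponent $2/(1-\sigma_0)$. The main obstacle is therefore to save one logarithm in the pointwise bound for $\vartheta(y,\chi)$, i.e.\ to reach $y^{\sigma_0}\log q$ rather than $y^{\sigma_0}(\log q)^2$.

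For this saving I would use the smoothed weights already implicit in the paper's weighted sums. Replace $\vartheta(y,\chi)$ by $\sum_\ell \chi(\ell)\log\ell\,w(\ell/y)$ with $w$ smooth and compactly supported. The explicit formula then gives
\[
\sum_\rho y^\rho\,\widehat w(\rho),
\]
with $\widehat w(\rho)$ decaying rapidly in $|\gamma|$. Since each $L(s,\chi)$ has $\ll\log q$ zeros in any unit-height window, this yields $O(y^{\sigma_0}\log q)$. With the smoothed sum (nonnegative weights supported on primes $\le y$), the condition becomes $y^{1-\sigma_0}\gg(\log q)^2$, i.e.\ $y\ge C(\log q)^{2/(1-\sigma_0)}$. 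Then $S(a)>0$, so every $a\in(\Z/q\Z)^\times$ is a product $\ell_1\ell_2\ell_3$ with $\ell_1,\ell_2\le q$ and $\ell_3\le C(\log q)^{2/(1-\sigma_0)}$.
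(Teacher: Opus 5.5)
Your argument is correct and is essentially the paper's proof: orthogonality, the second-moment bound $\sum_{\chi}|\vartheta(q,\chi)|^2\ll\varphi(q)q\log q$ for the two long variables, and a pointwise bound $\ll y^{\sigma_0}\log q$ (saving one logarithm through an absolutely convergent sum over zeros) for a weighted sum over $\ell_3$, which gives the threshold $y^{1-\sigma_0}\gg(\log q)^2$. The only differences are minor. The paper uses Montgomery's weight $1-\ell_3/y$, whose Mellin transform $1/(s(s+1))$ already gives the absolute convergence, where you use a smooth compactly supported $w$. The paper also states its main term asymptotically and so treats the finitely many small $q$ separately via Dirichlet's theorem, whereas your lower-bound formulation avoids that step.
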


Theorem~\ref{thm:conditional-3} is derived from an asymptotic
formula similar to that of Theorem~\ref{thm:ternary};
see~\eqref{eq:cond ternary}. In that formula the third prime
carries the weight $1-\ell_3/q_\bullet$ and for this reason we do
not state it here. Using the proof
of~\cite[Equation~(13.20)]{MV} in place of
Lemma~\ref{lem:conditional} below, one obtains a formula with the
same weights as in Theorem~\ref{thm:ternary} at the cost of an
additional factor $\log q_\bullet$ in the error term.

\begin{theorem}[Binary case, general moduli] \label{thm:conditional-2}
Under $\QRH[q,\sigma_0]$ we have
\[
\bigl|(\Z/q\Z)^\times\setminus\EOS_2(q)\bigr|
\ll\varphi(q)\,q^{-2(1-\sigma_0)}(\log q)^{3}.
\]
\end{theorem}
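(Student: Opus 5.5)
We count, for each class $a$, the weighted number of representations
\[
R(a)\defeq\ssum{\ell_1,\ell_2\le q\\ \ell_1\ell_2\equiv a\bmod q}\log\ell_1\log\ell_2
=\frac{1}{\varphi(q)}\sum_{\chi\bmod q}\bar\chi(a)\,\psi(q,\chi)^2,
\qquad \psi(q,\chi)\defeq\sum_{\ell\le q}\chi(\ell)\log\ell,
\]
and bound the number of exceptional $a$ by a second moment. The principal character gives the main term $\psi(q,\chi_0)^2/\varphi(q)\gg q^2/\varphi(q)\gg q$ by the prime number theorem. Let $E(a)$ be the sum over nonprincipal $\chi$. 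If $a\notin\EOS_2(q)$, then $R(a)=0$, so $|E(a)|\gg q$. By orthogonality,
\[
\sum_{a\in(\Z/q\Z)^\times}|E(a)|^2=\frac{1}{\varphi(q)}\sum_{\chi\ne\chi_0}|\psi(q,\chi)|^4 .
\]
Hence the number of missed classes is at most $q^{-2}\varphi(q)^{-1}\sum_{\chi\ne\chi_0}|\psi(q,\chi)|^4$.

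To bound $|\psi(q,\chi)|$ under $\QRH[q,\sigma_0]$, we use the truncated explicit formula (Lemma~\ref{lem:conditional}, or \cite[(12.12)]{MV}) with truncation height $T\approx q$:
\[
\psi(q,\chi)=-\sum_{|\gamma|\le T}\frac{q^{\rho}}{\rho}+O\bigl((\log q)^2\bigr).
\]
Every zero has $\beta\le\sigma_0$, and there are $\ll\log(q(|t|+2))$ zeros with $|\gamma-t|\le1$. Summing, we get
\[
|\psi(q,\chi)|\ll q^{\sigma_0}(\log q)^2 .
\]

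Using this pointwise bound for all four factors would give $\varphi(q)\,q^{4\sigma_0-2}(\log q)^8$, which is far too weak. Instead, we use the pointwise bound on two factors only:
\[
\sum_{\chi\ne\chi_0}|\psi|^4\le \max_{\chi\ne\chi_0}|\psi(q,\chi)|^2\sum_{\chi}|\psi(q,\chi)|^2 .
\]
By orthogonality,
\[
\sum_{\chi}|\psi(q,\chi)|^2=\varphi(q)\sum_{b}\Bigl(\sum_{\ell\le q,\ \ell\equiv b}\log\ell\Bigr)^2\ll\varphi(q)\,q\log q,
\]
since for $\ell\le q$ each reduced class contains at most one prime. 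Combining,
\[
\#\{a\notin\EOS_2(q)\}\ll q^{-2}\varphi(q)^{-1}\,q^{2\sigma_0}(\log q)^4\,\varphi(q)\,q\log q=q^{2\sigma_0-1}(\log q)^5 .
\]
The target is $\varphi(q)\,q^{-2+2\sigma_0}(\log q)^3\asymp q^{2\sigma_0-1}(\log q)^3$ up to $\varphi(q)/q$, so this matches the shape of the stated bound. The remaining work is to save the two extra powers of $\log q$.

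\emph{Main obstacle: the log savings.} There are two places to gain. First, replace the crude count in the explicit formula by $\sum_\rho|\rho|^{-1}\ll(\log q)^2$, and take $T=q$ with error $O((\log q)^2)$. That gives $|\psi|\ll q^{\sigma_0}(\log q)^2$ but not better. A sharper route is to drop the pointwise bound and treat the fourth moment through the zeros directly: expand $|\psi|^4$ as a sum over pairs of zeros and bound it by a large-sieve or mean-value estimate over $\chi$. Alternatively, smooth the weights via the Fejér kernel, as in the remark after Theorem~\ref{thm:conditional-3}, which gives absolutely convergent sums $\sum_\rho q^\rho/\rho(\rho+1)\ll q^{\sigma_0}\log q$. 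This saves one log per factor, giving $(\log q)^2\cdot\log q=(\log q)^3$. The main term is unaffected, since a smoothed count that is positive still forces $R(a)>0$. I would therefore run the argument with smoothed weights $(1-\ell_1/q)(1-\ell_2/q)$ from the start.
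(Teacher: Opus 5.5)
Once you commit to the weights $(1-\ell_1/q)(1-\ell_2/q)$, your argument is the paper's. You apply Lemma~\ref{lem:conditional} to two of the four factors and the second-moment bound \eqref{eq:second moment weighted} to the other two. Your Parseval identity over all classes is the primal form of the paper's Cauchy--Schwarz over the exceptional set. Both give the same inequality $|\cE|\le\varphi(q)M^2V/\widetilde\vartheta(q,\chi_0)^4$, with $M=\max_{\chi\ne\chi_0}|\widetilde\vartheta(q,\chi)|$ and $V=\sum_\chi|\widetilde\vartheta(q,\chi)|^2$. The unsmoothed detour and the idea of a large sieve over zeros are not needed.

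There is one slip, and as written it stops you short of the stated bound. You weaken the main term $\widetilde\vartheta(q,\chi_0)^2/\varphi(q)\gg q^2/\varphi(q)$ to $\gg q$. That costs a factor $(q/\varphi(q))^2$ in the count and leaves you with $q^{2\sigma_0-1}(\log q)^3$. This exceeds $\varphi(q)q^{-2(1-\sigma_0)}(\log q)^3$ by a factor $q/\varphi(q)$, which is unbounded (of order $\log\log q$ for primorial $q$). So your remark that the two agree ``up to $\varphi(q)/q$'' is not harmless for an implied constant depending only on $\sigma_0$.

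The fix is to keep the main term intact. For $a\in\cE$ the smoothed count vanishes, so $|E(a)|=\widetilde\vartheta(q,\chi_0)^2/\varphi(q)\gg q^2/\varphi(q)$. Hence
\[
|\cE|\ll\frac{\varphi(q)}{q^{4}}\sum_{\chi\ne\chi_0}\bigl|\widetilde\vartheta(q,\chi)\bigr|^4
\ll\varphi(q)^2q^{2\sigma_0-3}(\log q)^3\le\varphi(q)\,q^{-2(1-\sigma_0)}(\log q)^3,
\]
which is exactly where the paper ends up.
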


Under $\QRH[q,\tfrac12]$, which is
the \emph{generalized Riemann hypothesis} for all nonprincipal
characters modulo $q$, Theorem~\ref{thm:conditional-3} shows that
the third prime can be chosen with $\ell_3\ll(\log q)^4$ and
Theorem~\ref{thm:conditional-2} shows that all but $O((\log q)^3)$
reduced classes lie in $\EOS_2(q)$.

Our approach can be adapted to the more general sets
$\EOS_k(q;x)$, but apart from the range of the third prime in
Theorem~\ref{thm:ternary}, we confine ourselves to the classical
case $x=q$.

Throughout the paper, the letter $\ell$ always denotes a prime. In
our unconditional results and their proofs, $p$ is a fixed prime,
$q=p^\gamma$ with $\gamma\in\N$, and all constants, whether named
or implied by the symbols $O$, $\ll$ and $\gg$, depend at most on
$p$. In our conditional results and their proofs, $q\ge 3$ is an
arbitrary modulus, $\sigma_0\in[\tfrac12,1)$ is fixed and all
constants depend at most on $\sigma_0$.

\section{Character sums over primes}
\label{sec:charsums}

\subsection{Preparations}
In this subsection, $q$ is an arbitrary modulus. 
For every Dirichlet character $\chi\bmod q$ we put
\[
\psi(x,\chi)\defeq\sum_{n\le x}\Lambda(n)\chi(n)
\mand
\vartheta(x,\chi)\defeq\sum_{\ell\le x}\chi(\ell)\log\ell,
\]
and we write $\psi(x)$ and $\vartheta(x)$ for the ``untwisted''
Chebyshev functions. Following Riemann, we use the notation
$s=\sigma+it$ with $\sigma,t\in\R$.

We use the prime number theorem in the classical form
\be\label{eq:PNT}
\vartheta(x)=x+O\bigl(x\exp(-c\sqrt{\log x}\,)\bigr)
\qquad(x\ge 2),
\ee
where $c>0$ is an absolute constant;
see~\cite[Theorem~6.9]{MV}. Integrating, we also have
\be\label{eq:PNT weighted}
\sum_{\ell\le x}\Bigl(1-\frac{\ell}{x}\Bigr)\log\ell
=\frac1x\int_1^x\vartheta(t)\,dt
=\frac{x}{2}+O\bigl(x\exp(-c\sqrt{\log x}\,)\bigr)
\ee
for all $x\ge 2$ provided that $c$ is small enough, as we
assume from now on. We remark that using the stronger
Korobov--Vinogradov form of the prime number
theorem (see, e.g.,~\cite[Corollary~8.30]{IwKow}) in place
of~\eqref{eq:PNT} would not improve any of our main results; only
the first two error terms in~\eqref{eq:cond ternary} would change.

We also record a bound on the second moment of
$\vartheta(q,\chi)$. The primes $\ell\le q$ with  $\ell\nmid q$ lie
in distinct classes of $(\Z/q\Z)^\times$, so the orthogonality of
characters and the Chebyshev bound $\vartheta(q)\ll q$ give
\be\label{eq:second moment}
\sum_{\chi\bmod q}\bigl|\vartheta(q,\chi)\bigr|^2
=\varphi(q)\ssum{\ell\le q\\ \ell\,\nmid\,q}(\log\ell)^2
\le \varphi(q)\vartheta(q)\log q 
\ll\varphi(q)q\log q.
\ee

\subsection{Bounds for prime-power moduli}
In this subsection, we quote several results from the literature that
are proved only for \emph{primitive} characters, whereas our
aim is to treat every \emph{nonprincipal} character $\chi\bmod q$.
For a prime-power modulus the passage from one to the other is immediate.
Indeed, the primitive character $\fX$ that induces $\chi$ has
conductor $\fq=p^{\gamma_\star}$ for some $1\le\gamma_\star\le\gamma$,
and as $q$ and $\fq$ are powers of the same prime $p$,
it follows that $\chi$ and $\fX$ are the \emph{same function} and we have
\[
L(s,\chi)=L(s,\fX),\qquad
\psi(x,\chi)=\psi(x,\fX),\qquad
\vartheta(x,\chi)=\vartheta(x,\fX).
\]
Accordingly, any result known for primitive characters applies to 
$\chi$ with the conductor $\fq$ in place of $q$. 
The upper bounds that we quote are increasing in the modulus,
so they remain valid with $q$ in place of $\fq$. One exception is
the zero-free region of~\cite{BS20}, which requires $\fq$ to be large, a restriction
that is removed in Lemma~\ref{lem:zerofree} below.

Our principal tool is a zero-free region for the functions $L(s,\chi)$
with $\chi\bmod q$, in which no exceptional zero can occur.  For primitive
characters of large conductor the region is supplied by~\cite[Theorem~12]{BS20};
for characters with a smaller conductor we appeal to an
earlier result of Iwaniec~\cite[Theorem~2]{Iw74}.

\begin{lemma}\label{lem:zerofree}
For each prime $p$, there are constants $\gamma_0\ge 1$ and
$A_0\in(0,1]$ such that for every modulus $q=p^\gamma$
with $\gamma\ge\gamma_0$ and every nonprincipal character
$\chi\bmod q$, the function $L(s,\chi)$ does not vanish in the
region
\[
\euR_q\defeq\bigl\{s\in\C:\sigma>1-\eta_q,\ |t|\le T_q\bigr\},
\]
where
\[
\eta_q\defeq\frac{A_0}{(\log q)^{2/3}(\log\log q)^{1/3}}
\mand
T_q\defeq\exp\bigl((\log q)^{7/8}\bigr).
\]
\end{lemma}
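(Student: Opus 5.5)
Every nonprincipal $\chi\bmod q$ is induced by a primitive character $\fX$ of conductor $\fq=p^{\gamma_\star}$ with $1\le\gamma_\star\le\gamma$, and, as noted above, $L(s,\chi)=L(s,\fX)$. So it suffices to show that $L(s,\fX)$ has no zeros in $\euR_q$ for every primitive $\fX$ of conductor $\fq=p^{\gamma_\star}$, $\gamma_\star\le\gamma$. Since $\eta_q$ decreases with $q$ and $T_q$ increases with $q$, the region $\euR_q$ is thin and long. I will split according to the size of $\gamma_\star$ compared with $\gamma$.

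\emph{Large conductor.} Suppose $\gamma_\star\ge\gamma_\bullet$, where $\gamma_\bullet$ is the threshold in \cite[Theorem~12]{BS20}. That theorem gives a zero-free region of the shape $\sigma>1-A/((\log\fq)^{2/3}(\log\log\fq)^{1/3})$. The height range there is $|t|\le\fq^{B}$ or something comparable, or at least it contains $|t|\le\exp((\log\fq)^{\kappa})$ for some $\kappa>7/8$. The theorem also rules out exceptional zeros. The issue is that $T_q$ involves $q$ rather than $\fq$. If $\fq$ is much smaller than $q$, then $T_q=\exp((\log q)^{7/8})$ may exceed the height range of \cite{BS20} at $\fq$.

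I therefore subdivide this case. When $\log\fq\ge(\log q)^{7/8}$ (or whatever power the height range of \cite{BS20} allows), the height $\fq^{B}$ already covers $T_q$. The width needs no extra work, since $\eta_q\le\eta_\fq$ because $\fq\le q$.

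\emph{Small conductor.} Now suppose $\fq\le\exp((\log q)^{7/8})$, or that $\gamma_\star$ is below the threshold of \cite{BS20}. Here I use Iwaniec~\cite[Theorem~2]{Iw74}. For characters to a modulus $p^{\gamma_\star}$ with $p$ fixed, it gives the zero-free region
\[
\sigma>1-\frac{c}{\log\fq+(\log(|t|+3))^{2/3}(\log\log(|t|+3))^{1/3}},
\]
with at most one exceptional real zero. For $|t|\le T_q$ the denominator is
\[
\ll(\log q)^{7/8}+(\log q)^{7/12}(\log\log q)^{1/3}.
\]
This is $o((\log q)^{2/3}(\log\log q)^{1/3})$ as long as $\log\fq\ll(\log q)^{2/3}$. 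So in that range the region of \cite{Iw74} contains $\euR_q$ once $A_0$ is small and $\gamma$ is large.

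\emph{Main obstacle: the intermediate range.} This is $(\log q)^{2/3}\lesssim\log\fq\lesssim(\log q)^{7/8}$. There I would check that \cite[Theorem~12]{BS20} actually holds for $|t|$ up to $\exp((\log\fq)^{c})$ with $c$ large enough, or else up to a power of $\fq$ strengthened through the Korobov bound. If it does, then the inequality $(\log q)^{7/8}\le(\log\fq)^{c}$ covers the range. If it does not, I will balance the thresholds again using Iwaniec's $t$-aspect term.

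\emph{Exceptional zeros.} These must be excluded separately for small $\fq$. I would handle them by noting that only finitely many conductors $p^{\gamma_\star}$ with $\gamma_\star<\gamma_\bullet$ occur. Each of their finitely many $L$-functions has no zeros on $\sigma=1$, and so has a zero-free strip $\sigma>1-\delta$, $|t|\le1$, with $\delta$ depending only on $p$. Shrinking $A_0$ and enlarging $\gamma_0$ then gives $\eta_q<\delta$. For moderate $\gamma_\star\ge\gamma_\bullet$, the absence of Siegel zeros comes from \cite{BS19,BS20}.
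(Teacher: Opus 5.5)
Your architecture matches the paper's: pass to the primitive $\fX$, use \cite[Theorem~12]{BS20} for large conductor and \cite[Theorem~2]{Iw74} for small conductor, then remove real exceptional zeros. However, the argument does not close. The range $(\log q)^{2/3}\ll\log\fq\le(\log q)^{7/8}$, which you flag as the main obstacle, is left unproved, and with the inputs you state it cannot be proved. The cause is that you have misquoted Iwaniec's theorem. It does not put $\log\fq$ in the denominator. It says that $L(s,\fX)\ne0$ for
\[
\sigma>1-\frac{1}{4\cdot 10^4\{\log d+(l\log 2l)^{3/4}\}},\qquad l=\log\fq(|t|+3),
\]
apart from a possible real zero when $\fX$ is real, where $d$ is the product of the distinct primes dividing $\fq$. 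For $\fq=p^{\gamma_\star}$ you have $d=p$, so the conductor enters only through $l^{3/4}$; this is the reason for using Iwaniec here. If $\log\fq\le(\log q)^{7/8}$ and $|t|\le T_q$, then $l\le3(\log q)^{7/8}$. The denominator is therefore $O\bigl((\log q)^{21/32}(\log\log q)^{3/4}\bigr)$, which is at most $(\log q)^{2/3}\le\eta_q^{-1}$ for large $\gamma$, because $\tfrac78\cdot\tfrac34=\tfrac{21}{32}<\tfrac23$. So the small-conductor case covers everything with $\log\fq\le(\log q)^{7/8}$ and there is no intermediate range; the exponent $7/8$ in $T_q$ is chosen to make exactly this work. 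With your version, whose denominator is at least $\log\fq$, the bound fails once $\log\fq$ exceeds a constant multiple of $(\log q)^{2/3}(\log\log q)^{1/3}$, whatever $t$ is. Your fallback of rebalancing with Iwaniec's $t$-aspect term therefore cannot rescue it.

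Your guess for the height range in \cite{BS20} is also too weak. The region there extends to $|t|\le\exp\{B(\log\fq)^{5/3}(\log\log\fq)^{1/3}\}$, not merely to $\fq^B$. Since $\tfrac23\cdot\tfrac53>\tfrac78$, this height exceeds $T_q$ whenever $\log\fq\ge(\log q)^{2/3}$ and $q$ is large, so the correct form of either citation would close your gap. With the height $\fq^B$ that you assume, the gap stays open: for $\log\fq=(\log q)^{3/4}$, say, $\fq^B<T_q$. Your handling of exceptional zeros is fine: below the threshold of \cite{BS20} there are finitely many characters, and above it \cite{BS20} itself applies near the real axis. The paper instead observes that a real primitive character of $p$-power conductor has conductor $p$, $4$ or $8$. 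This gives a constant $\beta_p<1$ directly and requires only $A_0\le1-\beta_p$.
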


\begin{proof}
To begin, we recall the zero-free region of~\cite[Theorem~12]{BS20} in the
case of a prime-power modulus. There are constants
$\gamma_\sharp\ge 1$, $A>0$ and $B>0$ 
such that for every primitive character $\fX$ modulo
$\fq=p^{\gamma_\star}$ with $\gamma_\star\ge\gamma_\sharp$, the
function $L(s,\fX)$ does not vanish in the region defined by
\be\label{eq:BS region}
\sigma>1-\frac{A}{(\log\fq)^{2/3}(\log\log\fq)^{1/3}}
\mand
|t|\le\exp\bigl\{B(\log\fq)^{5/3}(\log\log\fq)^{1/3}\bigr\}.
\ee

Now let $\chi$ be a nonprincipal character modulo $q=p^\gamma$
with $\gamma\ge\gamma_0$, where $\gamma_0$ is sufficiently large in
terms of $p$. Let $\fX$ be the primitive character modulo
$\fq=p^{\gamma_\star}$ that induces $\chi$. Since
$L(s,\chi)=L(s,\fX)$, we must show that $L(s,\fX)$ does not vanish
in $\euR_q$.

First suppose that $\log\fq>(\log q)^{7/8}$. By our choice of
$\gamma_0$, this inequality implies that
$\gamma_\star\ge\gamma_\sharp$ and that
\[
B(\log\fq)^{5/3}(\log\log\fq)^{1/3}\ge(\log q)^{7/8};
\]
hence the second condition in~\eqref{eq:BS region} holds whenever
$|t|\le T_q$. Since $\fq\le q$, the first condition holds whenever
$\sigma>1-\eta_q$ provided that $A_0\le A$. With these choices
every point of $\euR_q$ satisfies~\eqref{eq:BS region} and the
lemma is proved for every character $\chi$ whose conductor
satisfies $\log\fq>(\log q)^{7/8}$.

For the remaining characters we recall the zero-free region of
Iwaniec~\cite[Theorem~2]{Iw74}. Let $\fX$ be a primitive character
modulo $\fq$, let $d$ be the product of the distinct primes
dividing $\fq$ and put $l\defeq\log\fq(|t|+3)$. Then $L(s,\fX)$
does not vanish when
\be\label{eq:Iw region}
\sigma>1-\frac{1}{4\cdot 10^4\{\log d+(l\log 2l)^{3/4}\}},
\ee
except possibly at a real zero in the case that $\fX$ is a real
character.

Now suppose that $\log\fq\le(\log q)^{7/8}$. In our situation
$d=p$, and for $|t|\le T_q$ it is easy to see that
$l\le 3(\log q)^{7/8}$, therefore
\[
4\cdot 10^4\{\log p+(l\log 2l)^{3/4}\}
\le(\log q)^{2/3}\le\eta_q^{-1}
\]
if $\gamma_0$ is large enough and $A_0\le 1$. Thus every point of
$\euR_q$ satisfies~\eqref{eq:Iw region} and so the only zeros of
$L(s,\fX)$ that can lie in $\euR_q$ are real zeros in the case
that $\fX$ is real.

It remains to exclude such zeros. A real primitive character whose
conductor is a power of $p$ has conductor $p$ if $p$ is odd and
conductor $4$ or $8$ if $p=2$; thus there are at most three such
characters and they are determined by $p$. Since $L(1,\fX)\ne 0$
and the zeros of $L(s,\fX)$ are isolated, there is a number
$\beta_p<1$, depending only on $p$, such that none of the
corresponding functions $L(\sigma,\fX)$ vanishes for
$\sigma>\beta_p$. As $\eta_q\le A_0$, the lemma follows with
the choice $A_0\defeq\min\{1,A,1-\beta_p\}$.
\end{proof}

To bound $\psi(x,\chi)$, we use the truncated explicit formula for
primitive characters together with the partial fraction expansion
of $L'/L$ in the forms given by Montgomery and
Vaughan~\cite[Theorem~12.10 and Lemma~12.6]{MV}. As usual, we put
\[
\psi_0(x,\chi)\defeq\tfrac12\bigl\{\psi(x^-,\chi)+\psi(x^+,\chi)\bigr\}
\qquad(x>1),
\]
the mean of the left and right limits of $\psi(y,\chi)$ as $y\to x$,
we write $\langle x\rangle$ for the distance from $x$ to the
nearest prime power other than $x$ itself and we denote
$\tau\defeq|t|+4$.

\begin{lemma}\label{lem:MV12.10}
Let $\fX$ be a primitive character modulo $\fq\ne 1$. For all $x,T\ge 2$
we have uniformly
\[
\psi_0(x,\fX)=-\ssum{\rho=\beta+i\gamma\\ |\gamma|\le T}
\frac{x^\rho}{\rho}
-\tfrac12\log(x-1)
-\tfrac12\fX(-1)\log(x+1)
+C(\fX)+R(x,T;\fX),
\]
where $\rho$ runs over the nontrivial zeros of $L(s,\fX)$,
\[
C(\fX)\defeq\frac{L'}{L}(1,\fX)+\log\frac{\fq}{2\pi}-C_0
\]
with $C_0=0.5772\cdots $ the Euler--Mascheroni constant, and
\[
R(x,T;\fX)\ll(\log x)\min\Bigl\{1,\frac{x}{T\langle x\rangle}\Bigr\}
+\frac{x}{T}(\log \fq Tx)^2.
\] 
\end{lemma}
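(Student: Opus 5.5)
This is the truncated explicit formula for primitive characters, and I would prove it along the standard lines of~\cite[Theorem~12.10]{MV}, checking that the constant term matches the normalization $C(\fX)$ stated here. The starting point is the truncated Perron formula applied to the Dirichlet series $-\frac{L'}{L}(s,\fX)=\sum_n\Lambda(n)\fX(n)n^{-s}$. Take $\kappa=1+1/\log x$ and integrate along $[\kappa-iT,\kappa+iT]$. This gives
\[
\psi_0(x,\fX)=\frac{1}{2\pi i}\int_{\kappa-iT}^{\kappa+iT}
\Bigl(-\frac{L'}{L}(s,\fX)\Bigr)\frac{x^s}{s}\,ds+R_1,
\]
where the usual estimate for the Perron tail bounds $R_1$ by $(\log x)\min\{1,x/(T\langle x\rangle)\}+\frac{x}{T}(\log x)^2$. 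Before doing this, I would perturb $T$ by $O(1)$ so that it stays at distance $\gg1/\log\fq T$ from every ordinate of a zero. This is possible because there are $O(\log\fq T)$ zeros with ordinate in $[T,T+1]$. The perturbation changes the sum over zeros by at most $O(\frac{x}{T}\log\fq T)$.

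Next I would move the contour to a far-left line $\sigma=-U$ with $U\to\infty$ through half-odd integers, and collect residues. The nontrivial zeros $\rho$ with $|\gamma|\le T$ contribute $-\sum x^\rho/\rho$. The trivial zeros at $s=-2n-\mathfrak a$, with $\mathfrak a\in\{0,1\}$ and $\fX(-1)=(-1)^{\mathfrak a}$, contribute $\sum_{n\ge1}x^{-2n}/(2n)=-\frac12\log(1-x^{-2})$ when $\mathfrak a=0$, and $\sum_{n\ge0}x^{-2n-1}/(2n+1)=\frac12\log\frac{1+x^{-1}}{1-x^{-1}}$ when $\mathfrak a=1$. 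The point $s=0$ contributes either $-\frac{L'}{L}(0,\fX)$ when $\mathfrak a=1$, or, when $\mathfrak a=0$, a double-pole residue of the form $-\log x-b(\fX)$, where $b(\fX)$ is the constant term of $L'/L$ at $0$.

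Combining the logarithms, I get in both cases $-\frac12\log(x-1)-\frac12\fX(-1)\log(x+1)$ plus a constant that depends only on $\fX$. I would then identify this constant with $C(\fX)=\frac{L'}{L}(1,\fX)+\log\frac{\fq}{2\pi}-C_0$. To do so I would take the logarithmic derivative of the functional equation $\xi(s,\fX)\leftrightarrow\xi(1-s,\overline\fX)$ and evaluate it at $s=0$ (or expand it there). The $\Gamma$-factor contributes $\frac12\frac{\Gamma'}{\Gamma}$ values, which produce $-C_0$ and $\log 2$ terms, and the factor $(\fq/\pi)^{s/2}$ produces $\log(\fq/\pi)$. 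If the natural output involves $L'/L(1,\overline\fX)$, I would rewrite it using conjugate symmetry in the form required.

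The remaining task is to bound the integrals along the horizontal segments $\sigma+iT$, $-U\le\sigma\le\kappa$, and along the far-left vertical line. For $-1\le\sigma\le 2$ I would use~\cite[Lemma~12.6]{MV}, which gives $\frac{L'}{L}(s,\fX)=\sum_{|\gamma-t|\le1}\frac{1}{s-\rho}+O(\log\fq\tau)$. With the choice of $T$ above, each of the $O(\log\fq T)$ terms is $O(\log\fq T)$, so $L'/L\ll(\log\fq T)^2$ on the segment. Integrating $x^\sigma/T$ then yields $\frac{x}{T}(\log\fq T)^2/\log x\ll\frac{x}{T}(\log\fq Tx)^2$. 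For $\sigma\le-1$ the functional equation gives $L'/L\ll\log(\fq|s|)$, and the far-left line contributes nothing as $U\to\infty$. I expect this horizontal-segment estimate, together with the uniform bookkeeping of $\fq$ in the error term, to be the main obstacle. All of these pieces appear in~\cite[\S12.1]{MV}, so in practice the lemma can be cited directly once the constant term has been matched.
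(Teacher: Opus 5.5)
Your outline is the standard argument of \cite[\S12.1]{MV}, which is all the paper relies on here: it quotes \cite[Theorem~12.10]{MV} without proof. The Perron step, the perturbation of $T$, the residues at the trivial zeros and at $s=0$, and the segment estimates are fine. (When $x<e$ the horizontal segments extend past $\sigma=2$, but there $|L'/L(s,\fX)|\le-\zeta'(2)/\zeta(2)$ trivially.)

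The step that fails is the final matching of the constant. The logarithmic derivative of $\xi(1-s,\overline\fX)=\epsilon(\fX)\,\xi(s,\fX)$ is
\[
\frac{L'}{L}(s,\fX)=-\frac{L'}{L}(1-s,\overline\fX)-\log\frac{\fq}{\pi}
-\frac12\frac{\Gamma'}{\Gamma}\Bigl(\frac{s+\mathfrak a}{2}\Bigr)
-\frac12\frac{\Gamma'}{\Gamma}\Bigl(\frac{1-s+\mathfrak a}{2}\Bigr).
\]
So in both parity cases the constant you obtain is $\frac{L'}{L}(1,\overline\fX)+\log\frac{\fq}{2\pi}-C_0$. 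Conjugate symmetry does not convert this into the printed $C(\fX)$. It only gives $\frac{L'}{L}(1,\overline\fX)=\overline{\frac{L'}{L}(1,\fX)}$, which need not equal $\frac{L'}{L}(1,\fX)$ when $\fX$ is complex.

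Nor can the discrepancy $2i\operatorname{Im}\frac{L'}{L}(1,\fX)$ be hidden in $R(x,T;\fX)$ for all $T\ge 2$. It does not depend on $T$, whereas for fixed $x$ the stated bound for $R$ tends to $0$ as $T\to\infty$ (recall $\langle x\rangle>0$). So the constant should read $\frac{L'}{L}(1,\overline\fX)$, as in \cite[Theorem~12.10]{MV}. With that correction, your argument proves the lemma for all $x,T\ge 2$.

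Alternatively, you can keep $\frac{L'}{L}(1,\fX)$ but restrict to $2\le T\le x$. For complex $\fX$ the classical zero-free region has no exceptional zero, so Lemma~\ref{lem:MV12.6} at $s=1$ gives $\frac{L'}{L}(1,\fX)\ll(\log\fq)^2\le\frac{x}{T}(\log\fq Tx)^2$. For real $\fX$ there is nothing to absorb.

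Either way the paper is unaffected. It applies the lemma only with $T=\min\{x,T_q\}\le x$, and it uses $C(\fX)$ only through the bound~\eqref{eq:C bound}, which holds equally for $\overline\fX$.
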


\begin{lemma}\label{lem:MV12.6}
Let $\fX$ be a primitive character modulo $\fq\ne 1$. The estimate
\[
\frac{L'}{L}(s,\fX)=\ssum{\rho=\beta+i\gamma\\ |\gamma-t|\le 1}
\frac1{s-\rho}+O(\log \fq\tau)
\]
holds uniformly for $-1\le\sigma\le 2$, where $\rho$ runs over the
nontrivial zeros of $L(s,\fX)$.
\end{lemma}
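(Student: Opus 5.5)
This is the standard partial-fraction estimate for $L'/L$, and I would prove it from the Hadamard product, as in \cite[Lemma~12.6]{MV}. Let $\kappa\in\{0,1\}$ be defined by $\fX(-1)=(-1)^\kappa$. The completed function
\[
\xi(s,\fX)=(\fq/\pi)^{(s+\kappa)/2}\Gamma\bigl(\tfrac{s+\kappa}{2}\bigr)L(s,\fX)
\]
is entire of order one and its zeros are exactly the nontrivial zeros $\rho$. Logarithmic differentiation of its Hadamard product gives
\[
\frac{L'}{L}(s,\fX)=B(\fX)+\sum_\rho\Bigl(\frac1{s-\rho}+\frac1\rho\Bigr)
-\tfrac12\log\frac{\fq}{\pi}-\tfrac12\frac{\Gamma'}{\Gamma}\Bigl(\frac{s+\kappa}{2}\Bigr).
\]
The functional equation gives $\Re B(\fX)=-\sum_\rho\Re(1/\rho)$. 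I would not need this directly, however, because $B(\fX)$ is eliminated by a subtraction.

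Put $s_1=2+it$. Since $|(L'/L)(s_1,\fX)|\le|\zeta'/\zeta(2)|\ll1$, subtracting the formula at $s_1$ from the formula at $s$ removes both $B(\fX)$ and the sums $\sum 1/\rho$. The result is
\[
\frac{L'}{L}(s,\fX)=\sum_\rho\Bigl(\frac1{s-\rho}-\frac1{s_1-\rho}\Bigr)
-\tfrac12\frac{\Gamma'}{\Gamma}\Bigl(\frac{s+\kappa}2\Bigr)
+\tfrac12\frac{\Gamma'}{\Gamma}\Bigl(\frac{s_1+\kappa}2\Bigr)+O(1).
\]
By Stirling's formula, the two digamma terms are $O(\log\tau)$. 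For $s$ in the strip this holds away from the poles of $\Gamma'/\Gamma$. The only pole in range is at the trivial zero $s=-1$ when $\kappa=1$, and there one must either exclude a small disc around $-1$ or keep that term explicitly. In our applications only $\sigma$ near $1$ matters, so this is harmless.

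Two counting facts about zeros are needed. First, taking real parts of the subtracted formula at $s=s_1$ itself gives
\[
\sum_\rho\Re\frac1{s_1-\rho}\ll\log\fq\tau .
\]
Every term here is positive and is $\gg(1+(t-\gamma)^2)^{-1}$. Hence the number of zeros with $|\gamma-t|\le1$ is $O(\log\fq\tau)$, and more generally
\[
\sum_\rho\frac{1}{1+(t-\gamma)^2}\ll\log\fq\tau .
\]

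Second, I split the sum over $\rho$. For zeros with $|\gamma-t|>1$ we have
\[
\Bigl|\frac1{s-\rho}-\frac1{s_1-\rho}\Bigr|=\frac{2-\sigma}{|s-\rho|\,|s_1-\rho|}\le\frac{3}{(\gamma-t)^2},
\]
so their total contribution is $O(\log\fq\tau)$ by the weighted count. For zeros with $|\gamma-t|\le1$, the terms $1/(s_1-\rho)$ are each $O(1)$, since $\Re(s_1-\rho)\ge1$. There are $O(\log\fq\tau)$ such zeros, so these terms are absorbed in the error. What remains is exactly $\sum_{|\gamma-t|\le1}1/(s-\rho)$, which gives the lemma.

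The only delicate step is obtaining the zero count uniformly in $\fq$ and $t$ without yet knowing the size of $B(\fX)$. The subtraction trick, combined with positivity of $\Re 1/(s_1-\rho)$, is what makes this work, so I would carry out that step first.
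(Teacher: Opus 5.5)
Your route (Hadamard product for $\xi(s,\fX)$, subtraction at $s_1=2+it$, and a weighted zero count for the terms with $|\gamma-t|>1$) is the standard proof of \cite[Lemma~12.6]{MV}, which the paper quotes without proof. However, the zero-counting step, which you yourself call the delicate one, fails as written. If you put $s=s_1$ into the subtracted formula, every term $\frac1{s-\rho}-\frac1{s_1-\rho}$ vanishes, and so does the difference of the two digamma terms. What remains is the tautology $\frac{L'}{L}(s_1,\fX)=\frac{L'}{L}(s_1,\fX)$, and the positive sum $\sum_\rho\Re\frac1{s_1-\rho}$ never appears on its own. The subtraction removes $B(\fX)$, but it also removes the information the count needs. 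The missing input is the identity you set aside, $\Re B(\fX)=-\sum_\rho\Re(1/\rho)$. Take real parts of the \emph{unsubtracted} formula at $s_1$ and use this identity to get
\[
\sum_\rho\Re\frac1{s_1-\rho}
=\Re\frac{L'}{L}(s_1,\fX)+\tfrac12\log\frac{\fq}{\pi}
+\tfrac12\Re\frac{\Gamma'}{\Gamma}\Bigl(\frac{s_1+\kappa}{2}\Bigr)
\ll\log\fq\tau .
\]
After that, the rest of your argument goes through unchanged: the positivity step, the bound $3/(\gamma-t)^2$ for distant zeros, and the treatment of nearby zeros. If you prefer to keep the subtraction idea, compare $s_1$ with $s=-\tfrac12+it$ rather than with $s_1$ itself. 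There $\Re\frac1{s-\rho}<0$ and $-\Re\frac1{s_1-\rho}<0$ for every $\rho$. So the real part of the zero sum has a fixed sign, and each term is $\gg(1+(t-\gamma)^2)^{-1}$ in absolute value. Meanwhile $\frac{L'}{L}(-\tfrac12+it,\fX)\ll\log\fq\tau$ by the functional equation. Alternatively, you can simply quote \cite[Theorem~10.17]{MV} for the count, as the paper does elsewhere.

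A smaller point concerns the trivial zeros. The poles of $\frac{\Gamma'}{\Gamma}\bigl(\frac{s+\kappa}{2}\bigr)$ in the strip $-1\le\sigma\le2$ lie at $s=-\kappa$. So for even $\fX$ ($\kappa=0$) the trivial zero at $s=0$ is also in range, not only $s=-1$ when $\kappa=1$. In both cases you must exclude a neighbourhood of $s=-\kappa$, or keep the term $1/(s+\kappa)$. This does not affect the paper, which uses the lemma only at $s=1$.
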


The main result of this  subsection is the following, in which
$\eta_q$ and $T_q$ are as in Lemma~\ref{lem:zerofree}.

\begin{proposition}\label{prop:pointwise}
For each prime $p$, there is a constant $\gamma_0$
such that for every modulus $q=p^\gamma$ with $\gamma\ge\gamma_0$
and every nonprincipal character $\chi\bmod q$, the bound 
\be\label{eq:pointwise}
\max\bigl\{\bigl|\psi(x,\chi)\bigr|,\bigl|\vartheta(x,\chi)\bigr|\bigr\}
\ll x(\log qx)^2\bigl(x^{-\eta_q}+T_q^{-1}\bigr)
\ee
holds for all $x\ge 2$.
\end{proposition}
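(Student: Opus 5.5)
We pass to the primitive character $\fX$ modulo $\fq=p^{\gamma_\star}$ that induces $\chi$. As explained at the start of this subsection, $\psi(x,\chi)=\psi(x,\fX)$ and $\vartheta(x,\chi)=\vartheta(x,\fX)$, and $\fq\le q$, so it suffices to bound $\psi(x,\fX)$ and then deduce the bound for $\vartheta$. The difference $\psi-\vartheta$ is $\ll\sqrt{x}\log^2 x$, and since $\eta_q\le\tfrac12$ this is $\ll x^{1-\eta_q}(\log x)^2$, which is admissible. Also $|\psi(x,\fX)-\psi_0(x,\fX)|\le\log x$, which is again admissible. We may assume $x^{-\eta_q}\ge T_q^{-1}$ or not; either way the target is $x(\log qx)^2\bigl(x^{-\eta_q}+T_q^{-1}\bigr)$. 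For small $x$, say $x\le(\log q)^{C}$, the trivial bound $\psi\ll x$ already suffices once $x^{-\eta_q}\gg(\log qx)^{-2}$, which holds since $\eta_q\log x=o(1)$ there.

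The main step is to apply Lemma~\ref{lem:MV12.10} with $T=T_q$. Every zero $\rho=\beta+i\gamma$ with $|\gamma|\le T_q$ has $\beta\le1-\eta_q$ by Lemma~\ref{lem:zerofree}, so each term satisfies $|x^\rho/\rho|\le x^{1-\eta_q}/|\rho|$. To sum $1/|\rho|$ over $|\gamma|\le T_q$, we apply Lemma~\ref{lem:MV12.6} at $s=2+it$, where $L'/L$ is bounded. Taking real parts, each summand contributes $\gg1$, and this shows that the number of zeros with $|\gamma-t|\le1$ is $\ll\log\fq\tau$. Dyadic summation then gives $\sum_{|\gamma|\le T}1/|\rho|\ll(\log \fq T)^2$. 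Zeros near $\rho\approx0$ need care, since $1/|\rho|$ could be large. Because $\beta\ge1/2$-symmetry fails in general, we handle the zeros with $|\gamma|\le1$ instead via $\beta\le1-\eta_q$ and the functional-equation symmetry $\rho\mapsto1-\bar\rho$. Then $\beta\ge\eta_q$, so $|\rho|\ge\eta_q$. Hence these terms contribute at most $x^{1-\eta_q}\eta_q^{-1}\log\fq$. I expect this to be the main obstacle, since $\eta_q^{-1}\log q$ exceeds $(\log q)^2$ slightly. To fix it, I would first try pairing $\rho$ with $1-\bar\rho$: both have modulus $\ge\min(|\rho|,\ldots)$, but the small one carries $x^{\beta}$ with $\beta\le\eta_q$ small, so its contribution is $x^{\eta_q}/|\rho|$. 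Alternatively, I would note that $\log T_q=(\log q)^{7/8}$ makes $(\log \fq T_q)^2\ll(\log q)^{7/4}$, which leaves room $(\log q)^{1/4}$ to absorb losses. A cleaner route is to use $L'/L(1,\fX)$ from Lemma~\ref{lem:MV12.6}: combining the constant $C(\fX)$ with the sum over low zeros $\sum_{|\gamma|\le1}(x^\rho-1)/\rho$, each term is bounded by $x^{\beta}\log x\le x^{1-\eta_q}\log x$ by the mean value theorem. So the combined contribution is $\ll x^{1-\eta_q}\log x\log\fq$.

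For the remaining terms, $T=T_q$ gives $R\ll\log x+\frac{x}{T_q}(\log qT_qx)^2$. Since $\log T_q\le\log q$, this is $\ll xT_q^{-1}(\log qx)^2$. The terms $\tfrac12\log(x\pm1)$ are trivially admissible. $C(\fX)$ has been absorbed as above, with the remaining $\log(\fq/2\pi)$ being $\ll\log q$. Collecting everything gives \eqref{eq:pointwise}, uniformly in $\fq$. No largeness of $\gamma_\star$ is needed, because Lemma~\ref{lem:zerofree} already covers every nonprincipal $\chi$ once $\gamma\ge\gamma_0$.
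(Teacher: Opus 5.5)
Your framework is the paper's. You use the explicit formula of Lemma~\ref{lem:MV12.10} and the zero-free region together with the reflection $\rho\mapsto1-\bar\rho$, which gives $\eta_q\le\beta\le1-\eta_q$ for $|\gamma|\le T_q$. You then count zeros and use $\eta_q\le\frac12$ to pass from $\psi$ to $\vartheta$. The paper takes $T=\min\{x,T_q\}$ and treats $x^{\eta_q}<2$ trivially. Your choice $T=T_q$ works equally well with the lemma as stated. One small correction: the $\log x$ part of $R(x,T_q;\fX)$ is not $\ll xT_q^{-1}(\log qx)^2$ for small $x$, so it must be absorbed into $x^{1-\eta_q}$ instead.

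The ``main obstacle'' you identify comes from an arithmetic slip. In fact $\eta_q^{-1}\log q=A_0^{-1}(\log q)^{5/3}(\log\log q)^{1/3}$, which is $o\bigl((\log q)^2\bigr)$, not larger than $(\log q)^2$. So the bound $x^{1-\eta_q}\eta_q^{-1}\log q$ you already have for the zeros with $|\gamma|\le1$ is admissible as it stands. The constant is handled the same way. Apply Lemma~\ref{lem:MV12.6} at $s=1$, using $|1-\rho|\ge1-\beta\ge\eta_q$ and the fact that there are $O(\log q)$ such zeros. This gives $C(\fX)\ll\eta_q^{-1}\log q\ll(\log q)^2\le x^{1-\eta_q}(\log qx)^2$. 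That is exactly the paper's argument, and none of your workarounds is needed.

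Two of the workarounds are also flawed. First, the claim $(\log\fq T_q)^2\ll(\log q)^{7/4}$ is false, since $\log\fq$ can be as large as $\log q$. Second, your ``cleaner route'' rests on $C(\fX)=\sum_{|\gamma|\le1}\rho^{-1}+O(\log q)$, which you do not justify. Lemma~\ref{lem:MV12.6} at $s=1$ yields $\sum_{|\gamma|\le1}(1-\rho)^{-1}$, and converting this into $\sum\rho^{-1}$ requires the functional equation and the exact form of the constant in the explicit formula. Once the slip is corrected, your proof reduces to the paper's.
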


\begin{proof}
Let $\gamma_0$ be the constant of Lemma~\ref{lem:zerofree}, which
we assume is large enough so that $\eta_q\le\tfrac12$.
Let $\chi$ be a nonprincipal character modulo
$q=p^\gamma$ with $\gamma\ge\gamma_0$ and let $\fX$ be the
primitive character modulo $\fq$ that induces $\chi$. Recall that
$\chi=\fX$ and $\fq\le q$.

By Lemma~\ref{lem:zerofree} the function $L(s,\fX)$ does not
vanish in $\euR_q$. Thus if $\rho=\beta+i\gamma$ is a nontrivial
zero of $L(s,\fX)$ with $|\gamma|\le T_q$, then
$\beta\le 1-\eta_q$. Since $\fX$ is primitive,
$1-\overline\rho=1-\beta+i\gamma$ is also a nontrivial zero and
therefore
\[
\eta_q\le\beta\le 1-\eta_q\qquad(|\gamma|\le T_q).
\]
We apply Lemma~\ref{lem:MV12.6} with $s=1$. Since $L(s,\fX)$ has
$O(\log q)$ zeros with $|\gamma|\le 1$
(see~\cite[Theorem~10.17]{MV}), we get that
\[
\frac{L'}{L}(1,\fX)=\ssum{\rho\\ |\gamma|\le 1}\frac1{1-\rho}
+O(\log q)\ll\eta_q^{-1}\log q.
\]
Consequently, the constant $C(\fX)$ of Lemma~\ref{lem:MV12.10}
satisfies
\be\label{eq:C bound}
C(\fX)\ll\eta_q^{-1}\log q\ll(\log q)^2.
\ee

We now prove~\eqref{eq:pointwise} for $\psi(x,\chi)$. The bound is
trivial if $x^{\eta_q}<2$, for then
\[
\bigl|\psi(x,\chi)\bigr|\le\psi(x)\ll x\le 2x^{1-\eta_q}.
\]
We therefore assume that $x^{\eta_q}\ge 2$ and we put
$T\defeq\min\{x,T_q\}$, so that $2\le T\le x$ and
\[
\frac{x}{T}\le 1+\frac{x}{T_q}\le x^{1-\eta_q}+\frac{x}{T_q}.
\]
We apply Lemma~\ref{lem:MV12.10} to $\fX$. Using~\eqref{eq:C bound} and the bounds
\[
\bigl|\psi(x,\chi)-\psi_0(x,\fX)\bigr|\le\log x
\mand
R(x,T;\fX)\ll\frac{x}{T}(\log qx)^2,
\]
we get that
\[
\psi(x,\chi)=-\ssum{\rho\\ |\gamma|\le T}
\frac{x^\rho}{\rho}
+O\bigl(x(\log qx)^2(x^{-\eta_q}+T_q^{-1})\bigr).
\]
Since $T\le T_q$, every zero in the sum has
$\eta_q\le\beta\le 1-\eta_q$, hence
\[
\ssum{\rho\\ |\gamma|\le T}\frac{x^\rho}{\rho}
\ll x^{1-\eta_q}\biggl\{\ssum{\rho\\ |\gamma|\le 1}
\frac1{\eta_q}+\ssum{\rho\\ 1<|\gamma|\le T}\frac1{|\gamma|}\biggr\}
\ll x^{1-\eta_q}\bigl\{\eta_q^{-1}\log q
+(\log qT)^2\bigr\}
\ll x^{1-\eta_q}(\log qx)^2.
\]
Combining this with the previous estimate, we obtain~\eqref{eq:pointwise} for $\psi(x,\chi)$. The same bound holds for
$\vartheta(x,\chi)$, since
\[
\bigl|\vartheta(x,\chi)-\psi(x,\chi)\bigr|
\le\ssum{\ell^k\le x\\ k\ge 2}\log\ell
\le\sum_{2\le k\le\log x/\log 2}\vartheta(x^{1/k})\ll x^{1/2}
\le x^{1-\eta_q}
\]
in view of the inequality $\eta_q\le\tfrac12$.
\end{proof}

\subsection{Conditional bounds for general moduli}
\label{sec:conditional}

In this  subsection, $q\ge 3$ is an arbitrary modulus and
$\sigma_0\in[\tfrac12,1)$ is fixed.
For every character $\chi\bmod q$ we denote
\[
\widetilde\vartheta(x,\chi)\defeq\sum_{\ell\le x}\Bigl(1-\frac{\ell}{x}\Bigr)
\chi(\ell)\log\ell.
\]
The weight $1-\ell/x$, which is taken from
Montgomery~\cite[Chapter~13]{Mo71}, makes the sum over zeros in
the explicit formula absolutely convergent. Consequently, the
bound of Lemma~\ref{lem:conditional} below has the factor $\log q$
where the corresponding bound for $\vartheta(x,\chi)$ has
 $(\log x)(\log qx)$. This saving matters for our application
since it lowers the powers of $\log q$ in
Theorems~\ref{thm:conditional-3} and~\ref{thm:conditional-2}.

\begin{lemma}\label{lem:conditional}
Under $\QRH[q,\sigma_0]$, for every nonprincipal character
$\chi\bmod q$ we have
\[
\widetilde \vartheta (x,\chi)\ll x^{\sigma_0}\log q\qquad(x\ge 2).
\]
\end{lemma}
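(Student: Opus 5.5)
We reduce to a primitive character and use the smoothed explicit formula (Perron's formula with the kernel $x^s/(s(s+1))$), as in Montgomery~\cite[Chapter~13]{Mo71}.

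\emph{Reduction.} Let $\fX$ modulo $\fq\mid q$ be the primitive character inducing $\chi$. Then $\chi$ and $\fX$ differ only at primes dividing $q$. Each such prime power contributes at most $\log\ell$ to the weighted sum, so the total discrepancy is at most $\sum_{\ell\mid q}\log\ell\le\log q$. Passing from prime powers to primes costs $\sum_{\ell^k\le x,\,k\ge2}\log\ell\ll x^{1/2}\le x^{\sigma_0}$. It therefore suffices to show
\[
\widetilde\psi(x,\fX)\defeq\sum_{n\le x}\Bigl(1-\frac nx\Bigr)\Lambda(n)\fX(n)\ll x^{\sigma_0}\log q .
\]
Since the zeros of $L(s,\chi)$ and $L(s,\fX)$ in $\sigma>0$ coincide, $\QRH[q,\sigma_0]$ applies to $\fX$.

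\emph{Explicit formula.} We have
\[
\widetilde\psi(x,\fX)=\frac{1}{2\pi i}\int_{(2)}-\frac{L'}{L}(s,\fX)\frac{x^s}{s(s+1)}\,ds .
\]
Shift the contour to the left, say to $\sigma=-\tfrac12$ and then further, using the functional equation, or simply use the resulting standard formula
\[
\widetilde\psi(x,\fX)=-\sum_\rho\frac{x^\rho}{\rho(\rho+1)}+(\text{residues at }s=0,-1\text{ and trivial zeros}).
\]
The trivial-zero and $s=-1$ terms are $O(\log q)$ or smaller. The residue at $s=0$ is $\ll\frac{L'}{L}(0,\fX)+\log x$ when $\fX(-1)=1$ (double pole), and $\ll|\frac{L'}{L}(0,\fX)|/x$-type or $O(1)$ terms otherwise.

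\emph{Bounding the zero sum.} Each nontrivial zero satisfies $\beta\le\sigma_0$, hence $|x^\rho|\le x^{\sigma_0}$. The count of zeros with $|\gamma-T|\le1$ is $\ll\log q(|T|+2)$, so
\[
\sum_\rho\frac1{|\rho(\rho+1)|}\ll\log q ,
\]
provided low zeros with $|\rho|$ small are controlled. By the symmetry $\rho\mapsto1-\overline\rho$, every nontrivial zero has $\beta\ge1-\sigma_0>0$. Hence $|\rho|\ge1-\sigma_0\gg1$, and the $O(\log q)$ zeros with $|\gamma|\le1$ contribute $O(\log q)$. So the zero sum is $\ll x^{\sigma_0}\log q$.

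\emph{Main obstacle.} The real difficulty is the constant term: we must bound $\frac{L'}{L}(0,\fX)$, or equivalently $\frac{L'}{L}(1,\overline\fX)$ via the functional equation, by $O(\log q)$, and handle the $\log x$ term coming from the double pole.

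For the first, apply Lemma~\ref{lem:MV12.6} at $s=1$. The zeros with $|\gamma|\le1$ number $O(\log q)$, and each satisfies $|1-\rho|\ge1-\beta\ge1-\sigma_0$. This gives $\frac{L'}{L}(1,\fX)\ll\log q$ with a constant depending on $\sigma_0$. The same reasoning at $s=0$ (using $\beta\ge1-\sigma_0$) handles the evaluation there.

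For the second, the stray term is of size $\ll\log x$. It is absorbed since $\log x\ll x^{\sigma_0}$. Equally, the whole contribution at $s=0$ is $\ll\log q+\log x\ll x^{\sigma_0}\log q$.

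Finally, to keep the argument clean, I would cite the form of the smoothed explicit formula in~\cite[Chapter~13]{Mo71} rather than rederive the contour shift.
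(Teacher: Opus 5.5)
Your proposal is correct and follows the same route as the paper. You pass to the primitive character (cost at most $\log q$), discard prime powers (cost $O(x^{1/2})$), and use the explicit formula for the $(1-n/x)$-weighted sum with kernel $x^s/(s(s+1))$ shifted to the left of the critical strip. You then bound $\sum_\rho x^\rho/(\rho(\rho+1))$ by $x^{\sigma_0}\log q$ via $1-\sigma_0\le\beta\le\sigma_0$ and zero counting, with the constant term at $s=0$ being $O(\log q)$.

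One small bookkeeping slip does not affect the bound. For odd $\fX$ the residue at $s=0$ is simply $-\frac{L'}{L}(0,\fX)$; the factor $x^{-1}$ belongs to the pole at $s=-1$. For even $\fX$, your $\frac{L'}{L}(0,\fX)$ should be read as the constant term of the Laurent expansion at the trivial zero $s=0$.
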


\begin{proof}
 For $\sigma_0=\tfrac12$, this follows
from~\cite[Equation~(13.28)]{MV} on removing the prime powers.
The proof given there applies to every $\sigma_0\in[\tfrac12,1)$,
provided that $\chi$ is first replaced by the primitive character
that induces it, which changes $\widetilde\vartheta(x,\chi)$ by at
most $\log q$, and that the contour is moved to the line
$\sigma=-\tfrac12$ instead of $\sigma=\tfrac14$, since under
$\QRH[q,\sigma_0]$ the zeros need not lie to the right of the
latter line. In this way one finds that
\[
\widetilde \vartheta (x,\chi)=-\sum_\rho\frac{x^\rho}{\rho(\rho+1)}
+O\bigl(x^{1/2}+\log q\cdot\log x\bigr),
\]
where $\rho=\beta+i\gamma$ runs over the zeros of $L(s,\chi)$ in
the strip $0<\sigma<1$. Under $\QRH[q,\sigma_0]$, every such zero
satisfies $1-\sigma_0\le\beta\le\sigma_0$ by the functional
equation. Since $L(s,\chi)$ has $O(\log qn)$ zeros with
$n-1\le|\gamma|\le n$ for each $n\ge 1$
(see~\cite[Theorem~10.17]{MV}), the sum over zeros is
$O(x^{\sigma_0}\log q)$ and the result follows.
\end{proof}

Since the weights do not exceed one, the argument that
gives~\eqref{eq:second moment} also shows that
\be\label{eq:second moment weighted}
\sum_{\chi\bmod q}\bigl|\widetilde\vartheta(q,\chi)\bigr|^2
=\varphi(q)\ssum{\ell\le q\\ \ell\,\nmid\,q}
\Bigl(1-\frac{\ell}{q}\Bigr)^2(\log\ell)^2
\ll\varphi(q)q\log q.
\ee

The primes $\ell\mid q$ are the only ones missing from the sums
$\vartheta(x,\chi_0)$ and $\widetilde\vartheta(x,\chi_0)$ and
their logarithms contribute at most $\log q$ to either sum.
 Hence~\eqref{eq:PNT} and~\eqref{eq:PNT weighted} show that for
all $x\ge 2$ we have
\be\label{eq:stdPNT}
 \vartheta(x,\chi_0)=x+O\bigl(x\exp(-c\sqrt{\log x}\,)+\log q\bigr)
\ee
and
\be\label{eq:principal weighted}
\widetilde\vartheta(x,\chi_0)=\frac{x}{2}
+O\bigl(x\exp(-c\sqrt{\log x}\,)+\log q\bigr).
\ee

\section{Proofs of results for prime-power moduli}
\label{sec:proofs}

\subsection{Proof of Theorem~\ref{thm:ternary}}
Let $\gamma_0$ be as in Proposition~\ref{prop:pointwise} and let
$A_0$ be as in Lemma~\ref{lem:zerofree}. We show that the theorem
holds with $C_1\defeq 8A_0^{-1}$,  $c_1\defeq\tfrac12A_0$ and any
sufficiently large $\gamma_1\ge\gamma_0$. Let $q=p^\gamma$ with
$\gamma\ge\gamma_1$ and suppose that  $q_\bullet\le q$
satisfies~\eqref{eq:deranged}.  With our choice of $c_1$, the error
term in the theorem is $O(q_\bullet^{-\eta_q/2})$, and
by~\eqref{eq:deranged} and our choice of $C_1$ we have
\be\label{eq:log four}
q_\bullet^{\eta_q/2}=\exp\bigl(\tfrac12\eta_q\log q_\bullet\bigr)
\ge\exp\bigl(\tfrac12A_0C_1\log\log q\bigr)=(\log q)^4.
\ee
For every $a\in(\Z/q\Z)^\times$, the orthogonality of characters
gives
\begin{align*}
\Sigma_a&\defeq\ssum{\ell_1,\ell_2\le q,\,\ell_3\le q_\bullet\\
\ell_1\ell_2\ell_3\equiv a\bmod q}
\log\ell_1\cdot\log\ell_2\cdot\log\ell_3\\
&=\ssum{\ell_1,\ell_2\le q,\,\ell_3\le q_\bullet}
\log\ell_1\cdot\log\ell_2\cdot\log\ell_3\cdot\frac{1}{\varphi(q)}
\sum_{\chi\bmod q}\chi(\ell_1\ell_2\ell_3\bar a)\\
&=\frac{1}{\varphi(q)}\sum_{\chi\bmod q}\chi(\bar a)\,
\vartheta(q,\chi)^2\,\vartheta(q_\bullet,\chi),
\end{align*}
where $\bar a$ is the inverse of $a$ in $(\Z/q\Z)^\times$.

The principal character $\chi_0$ provides the main term. Since
$q_\bullet\ge p$ by~\eqref{eq:deranged}, we have
\[
\vartheta(q,\chi_0)=\vartheta(q)-\log p
\mand
\vartheta(q_\bullet,\chi_0)=\vartheta(q_\bullet)-\log p.
\]
We apply~\eqref{eq:PNT}. Since
$\log q_\bullet\le\log q$, the inequality
\[
c\sqrt{\log x}\ge\tfrac12\eta_q\log x\ge\tfrac12\eta_q\log q_\bullet
\]
holds for both $x=q_\bullet$ and $x=q$ if $\gamma_1$ is large
enough, and we also have $x^{-1}\log p\ll q_\bullet^{-\eta_q/2}$
since $\eta_q\le\tfrac12$. Consequently,
$\vartheta(x,\chi_0)=x\bigl\{1+O(q_\bullet^{-\eta_q/2})\bigr\}$ holds in
both cases and thus
\[
\frac{\vartheta(q,\chi_0)^2\vartheta(q_\bullet,\chi_0)}{\varphi(q)}
=\frac{q^2q_\bullet}{\varphi(q)}
\Bigl\{1+O\bigl(q_\bullet^{-\eta_q/2}\bigr)\Bigr\}.
\]

For the nonprincipal characters we apply~\eqref{eq:second moment}
and obtain
\begin{align*}
\frac1{\varphi(q)}\sum_{\chi\ne\chi_0}
\bigl|\vartheta(q,\chi)\bigr|^2\bigl|\vartheta(q_\bullet,\chi)\bigr|
&\le\max_{\chi\ne\chi_0}\bigl|\vartheta(q_\bullet,\chi)\bigr|
\cdot\frac1{\varphi(q)}\sum_{\chi\bmod q}\bigl|\vartheta(q,\chi)\bigr|^2\\
&\ll\frac{q^2q_\bullet}{\varphi(q)}
\cdot\frac{\log q}{q_\bullet}
\max_{\chi\ne\chi_0}\bigl|\vartheta(q_\bullet,\chi)\bigr|.
\end{align*}
We have $\log(qq_\bullet)\le 2\log q$, and
$T_q\ge q^{\eta_q}\ge q_\bullet^{\eta_q}$ if $\gamma_1$ is large
enough. Hence~\eqref{eq:pointwise} with $x=q_\bullet$ gives
\[
\max_{\chi\ne\chi_0}\bigl|\vartheta(q_\bullet,\chi)\bigr|
\ll q_\bullet^{1-\eta_q}(\log q)^2,
\]
and by~\eqref{eq:log four} we conclude that 
\[
\frac1{\varphi(q)}\sum_{\chi\ne\chi_0}
\bigl|\vartheta(q,\chi)\bigr|^2\bigl|\vartheta(q_\bullet,\chi)\bigr|
\ll\frac{q^2q_\bullet}{\varphi(q)}\,(\log q)^3q_\bullet^{-\eta_q}
\le\frac{q^2q_\bullet}{\varphi(q)}\,q_\bullet^{-\eta_q/2}.
\]
This proves the desired asymptotic formula.

\subsection{Proof of Theorem~\ref{thm:binary}}
Let $\gamma_0$ be as in Proposition~\ref{prop:pointwise} and let
$A_0$ be as in Lemma~\ref{lem:zerofree}. We show that the theorem
holds with any positive $C_2<2A_0$ and any sufficiently large
$\gamma_2\ge\gamma_0$. Let $q=p^\gamma$ with $\gamma\ge\gamma_2$.
For every $a\in(\Z/q\Z)^\times$, let
\[
f(a)\defeq\ssum{\ell_1,\ell_2\le q\\ \ell_1\ell_2\equiv a\bmod q}
\log\ell_1\cdot\log\ell_2
=\frac1{\varphi(q)}\sum_{\chi\bmod q}\overline\chi(a)\vartheta(q,\chi)^2,
\]
and note that $f$ vanishes on the set
$\cE\defeq(\Z/q\Z)^\times\setminus \EOS_2(q)$. Summing over $a\in\cE$,
we get that
\[
\sum_{\chi\bmod q}\vartheta(q,\chi)^2S(\chi)=0,
\qquad\text{where}\quad
S(\chi)\defeq\sum_{a\in\cE}\overline\chi(a).
\]
Since $S(\chi_0)=|\cE|$, it follows that
\be\label{eq:E_isolate1}
\vartheta(q,\chi_0)^2|\cE|
\le\max_{\chi\ne\chi_0}\bigl|\vartheta(q,\chi)\bigr|
\sum_{\chi\bmod q}\bigl|\vartheta(q,\chi)\bigr|\bigl|S(\chi)\bigr|.
\ee
By the orthogonality of characters we have
\be\label{eq:E_isolate2}
\sum_{\chi\bmod q}\bigl|S(\chi)\bigr|^2=\varphi(q)|\cE|.
\ee
Hence, using the Cauchy--Schwarz inequality
and~\eqref{eq:second moment}, we see that
\[
\sum_{\chi\bmod q}\bigl|\vartheta(q,\chi)\bigr|\bigl|S(\chi)\bigr|
\ll\varphi(q)(q\log q)^{1/2}|\cE|^{1/2}.
\]
Moreover, the maximum in~\eqref{eq:E_isolate1} is
$O(q^{1-\eta_q}(\log q)^2)$ by~\eqref{eq:pointwise} with
$x=q$ and the inequality $T_q\ge q^{\eta_q}$, and
$\vartheta(q,\chi_0)\gg q$ by the prime number theorem.
Combining these bounds, we obtain
\[
q^2|\cE|\ll q^{1-\eta_q}(\log q)^2
\cdot\varphi(q)(q\log q)^{1/2}|\cE|^{1/2},
\]
and thus
\[
|\cE|\ll\frac{\varphi(q)^2}{q}\,q^{-2\eta_q}(\log q)^5
\le\varphi(q)\exp\bigl(-2A_0(\log q)^{1/3}(\log\log q)^{-1/3}
+5\log\log q\bigr).
\]
The result follows from our choice of $C_2$ if $\gamma_2$ is
large enough.

\section{Proofs of conditional results for general moduli}
\label{sec:conditional-proof}

\subsection{Proof of Theorem~\ref{thm:conditional-3}}
We first show that for every  $q_\bullet\ge 2$ and every
$a\in(\Z/q\Z)^\times$ we have
\be\label{eq:cond ternary}
\begin{split}
&\ssum{\ell_1,\ell_2\le q,\,\ell_3\le q_\bullet\\
\ell_1\ell_2\ell_3\equiv a\bmod q}
\Bigl(1-\frac{\ell_3}{q_\bullet}\Bigr)
\log\ell_1\cdot\log\ell_2\cdot\log\ell_3\\
&\qquad=\frac{q^2q_\bullet}{2\varphi(q)}
\biggl\{1+O\biggl(\exp\bigl(-c\sqrt{\log q_\bullet}\,\bigr)
+\exp\bigl(-c\sqrt{\log q}\,\bigr)
+\frac{(\log q)^2}{q_\bullet^{1-\sigma_0}}\biggr)\biggr\},
\end{split}
\ee
where $c>0$ is the absolute constant in~\eqref{eq:PNT}
and~\eqref{eq:PNT weighted}.

Let $\Sigma_a$ denote the left side
of~\eqref{eq:cond ternary}. As in the proof of
Theorem~\ref{thm:ternary}, the orthogonality of characters gives
\[
\Sigma_a=\frac{1}{\varphi(q)}\sum_{\chi\bmod q}\chi(\bar a)
\vartheta(q,\chi)^2\widetilde\vartheta(q_\bullet,\chi).
\]
By~\eqref{eq:second moment} the contribution of the
nonprincipal characters to this sum is
\[
O\Bigl(q\log q\cdot\max_{\chi\ne\chi_0}
\bigl|\widetilde\vartheta(q_\bullet,\chi)\bigr|\Bigr).
\]
By Lemma~\ref{lem:conditional} we therefore have
\be\label{eq:Sigma weighted}
\Sigma_a=\frac{\vartheta(q,\chi_0)^2\widetilde\vartheta(q_\bullet,\chi_0)}
{\varphi(q)}+O\bigl(q\,q_\bullet^{\sigma_0}(\log q)^2\bigr).
\ee
 Since $q^{-1}\log q\ll\exp\bigl(-c\sqrt{\log q}\,\bigr)$, the
estimates~\eqref{eq:stdPNT} with $x=q$
and~\eqref{eq:principal weighted} with $x=q_\bullet$ give
\[
\vartheta(q,\chi_0)
=q\Bigl\{1+O\Bigl(\exp\bigl(-c\sqrt{\log q}\,\bigr)\Bigr)\Bigr\}
\]
and
\[
\widetilde\vartheta(q_\bullet,\chi_0)
=\frac{q_\bullet}{2}\Bigl\{1+O\Bigl(
\exp\bigl(-c\sqrt{\log q_\bullet}\,\bigr)
+\frac{\log q}{q_\bullet}\Bigr)\Bigr\}.
\]
Moreover,
$q_\bullet^{-1}\log q\le(\log q)^2q_\bullet^{-(1-\sigma_0)}$.
Hence the first term on the right side
of~\eqref{eq:Sigma weighted} has the form given on the right side
of~\eqref{eq:cond ternary}, and since $\varphi(q)\le q$, the
second term is
\[
O\Bigl(\frac{q^2q_\bullet}{\varphi(q)}
\cdot\frac{(\log q)^2}{q_\bullet^{1-\sigma_0}}\Bigr).
\]
This proves~\eqref{eq:cond ternary}.

Finally, let $q_\bullet\defeq C(\log q)^{2/(1-\sigma_0)}$ with
$C\ge 3$. Then  $q_\bullet\ge C$, and the  first and third error
terms in~\eqref{eq:cond ternary} satisfy
\[
\exp\bigl(-c\sqrt{\log q_\bullet}\,\bigr)
\le\exp\bigl(-c\sqrt{\log C}\,\bigr)
\mand
\frac{(\log q)^2}{q_\bullet^{1-\sigma_0}}=C^{-(1-\sigma_0)}.
\]
Hence~\eqref{eq:cond ternary} shows that $\Sigma_a>0$ for every
$a\in(\Z/q\Z)^\times$ if $C$ and $q$ are large enough in terms
of $\sigma_0$. For each of the finitely many remaining $q\ge 3$, some
prime $\ell_0\le q$ does not divide $q$ and by Dirichlet's
theorem every $a\in(\Z/q\Z)^\times$ is of the form
$\ell_0^2\ell_3$ with some prime $\ell_3$; the last assertion of the
theorem follows after enlarging $C$.

\subsection{Proof of Theorem~\ref{thm:conditional-2}}
Clearly the result is trivial unless $q$ is
large. For every $a\in(\Z/q\Z)^\times$, put
\[
g(a)\defeq\ssum{\ell_1,\ell_2\le q\\ \ell_1\ell_2\equiv a\bmod q}
\Bigl(1-\frac{\ell_1}{q}\Bigr)\Bigl(1-\frac{\ell_2}{q}\Bigr)
\log\ell_1\cdot\log\ell_2
=\frac1{\varphi(q)}\sum_{\chi\bmod q}\overline\chi(a)
\widetilde \vartheta (q,\chi)^2
\]
and observe that $g$ vanishes on the set
$\cE\defeq(\Z/q\Z)^\times\setminus \EOS_2(q)$. 
Then
\[
\sum_{a\in\cE}g(a)=\sum_{\chi\bmod q}\widetilde \vartheta (q,\chi)^2S(\chi)=0,
\]
where $S(\chi)\defeq\sum_{a\in\cE}\overline\chi(a)$ as before.
Since $S(\chi_0)=|\cE|$, we have
\[
\widetilde \vartheta (q,\chi_0)^2|\cE|
\le\max_{\chi\ne\chi_0}\bigl|\widetilde \vartheta (q,\chi)\bigr|
\sum_{\chi\bmod q}\bigl|\widetilde \vartheta (q,\chi)\bigr|\bigl|S(\chi)\bigr|.
\]
Using the Cauchy--Schwarz inequality
together with~\eqref{eq:E_isolate2} and~\eqref{eq:second moment weighted},
\[
\sum_{\chi\bmod q}\bigl|\widetilde \vartheta (q,\chi)\bigr|\bigl|S(\chi)\bigr|
\ll\varphi(q)(q\log q)^{1/2}|\cE|^{1/2}.
\]
Moreover, $\widetilde \vartheta (q,\chi_0)\gg q$
by~\eqref{eq:principal weighted}, and
\[
\max_{\chi\ne\chi_0}\bigl|\widetilde \vartheta (q,\chi)\bigr|
\ll q^{\sigma_0}\log q
\]
by Lemma~\ref{lem:conditional}. Putting everything together, we obtain
\[
q^2|\cE|\ll q^{\sigma_0}\log q
\cdot\varphi(q)(q\log q)^{1/2}|\cE|^{1/2},
\]
and the result  follows since $\varphi(q)\le q$.

\section*{Acknowledgments}

The authors thank the Max Planck Institute for Mathematics in
Bonn for its support and excellent working conditions during the
preparation of this paper. The second author was also
supported in part by Australian Research Council Grant
DP230100534.

The authors disclose that the large language model
Claude (Anthropic) was used in the preparation of this paper, in
particular for suggestions on writing and exposition, for literature
searches and for proof checking. The ideas behind the proofs are due
to the authors, who take full responsibility for the mathematical
content.


\begin{thebibliography}{99}

\bibitem{BS19}
W.~D. Banks and I.~E. Shparlinski,
Bounds on short character sums and $L$-functions with characters
to a powerful modulus.
\emph{J. Anal. Math.} 139 (2019), 239--263.

\bibitem{BS20}
W.~D. Banks and I.~E. Shparlinski,
Sums with the M\"obius function twisted by characters with
powerful moduli.
\emph{Trans. Amer. Math. Soc.} 373 (2020), 249--272.

\bibitem{EOS}
P.~Erd\H{o}s, A.~Odlyzko, and A.~S\'ark\"ozy,
On the residues of products of prime numbers.
\emph{Period. Math. Hungar.} 18 (1987), 229--239.

\bibitem{Iw74}
H.~Iwaniec,
On zeros of Dirichlet's $L$ series.
\emph{Invent. Math.} 23 (1974), 97--104.

\bibitem{IwKow}
H.~Iwaniec and E.~Kowalski,
\emph{Analytic number theory}.
Amer. Math. Soc. Colloq. Publ., vol.~53, Amer. Math. Soc.,
Providence, RI, 2004.

\bibitem{KMT}
O.~Klurman, A.~P. Mangerel, and J.~Ter\"av\"ainen,
Multiplicative functions in short arithmetic progressions.
\emph{Proc. Lond. Math. Soc. (3)} 127 (2023), 366--446.

\bibitem{Ko69}
N.~M. Korobov,
Double trigonometric sums and their applications to the estimation
of rational sums.
\emph{Math. Notes} 6 (1969), 472--478.

\bibitem{Ko74}
N.~M. Korobov,
The distribution of digits in periodic fractions.
\emph{Math. USSR-Sb.} 18 (1974), 659--676.

\bibitem{MT}
K.~Matom\"aki and J.~Ter\"av\"ainen,
Products of primes in arithmetic progressions.
\emph{J. Reine Angew. Math.} 808 (2024), 193--240.

\bibitem{Mo71}
H.~L. Montgomery,
\emph{Topics in multiplicative number theory}.
Lecture Notes in Math., vol.~227, Springer-Verlag, Berlin, 1971.

\bibitem{MV}
H.~L. Montgomery and R.~C. Vaughan,
\emph{Multiplicative number theory. I. Classical theory}.
Cambridge Stud. Adv. Math., vol.~97, Cambridge Univ. Press, Cambridge, 2007.

\bibitem{Po55}
A.~G. Postnikov,
On the sum of characters with respect to a modulus equal to a power
of a prime number.
\emph{Izv. Akad. Nauk SSSR. Ser. Mat.} 19 (1955), 11--16.

\bibitem{Po56}
A.~G. Postnikov,
On Dirichlet $L$-series with the character modulus equal to the
power of a prime number.
\emph{J. Indian Math. Soc. (N.S.)} 20 (1956), 217--226.

\end{thebibliography}
\end{document}